\title{}
\author{}
\date{}
\titleformat{\chapter}[display]
{\bfseries\huge}
{\filcenter\MakeUppercase{\chaptertitlename} \Huge\thechapter}
{1ex}
{\titlerule\vspace{1ex}\filcenter}
[\vspace{1ex}\titlerule]
\newtheorem{thm}{Theorem}
\newtheorem{prop}{Proposition}
\newtheorem{coroll}{Corollary}
\newtheorem{lemma}{Lemma}
\newcommand{\Z}{\mathbb{Z}}
\newcommand{\Co}{\mathbb{C}}
\newcommand{\Q}{\mathbb{Q}}
\newcommand{\p}{\mathbb{P}}
\newcommand{\Qal}{\overline{\Q}}
\theoremstyle{plain} 
\newcommand{\thistheoremname}{}
\newtheorem{genericthm}[thm]{\thistheoremname}
\newtheorem*{genericthm*}{\thistheoremname}
\newenvironment{namedthm*}[1]
{\renewcommand{\thistheoremname}{#1}%
	\begin{genericthm*}}
	{\end{genericthm*}}
\DeclareMathOperator{\ai}{a}
\DeclareMathOperator{\bi}{b}
\DeclareMathOperator{\ci}{c}
\DeclareMathOperator{\Frob}{Frob}
\DeclareMathOperator{\paruno}{(}
\DeclareMathOperator{\pardue}{)}
\DeclareMathOperator{\lcm}{lcm}
\newcommand*{\math@version@bold}{bold}
\DeclareMathOperator\Sha{
	\textrm{%
		\usefont{T2A}{cmr}{\ifx\math@version\math@version@bold bx\else m\fi}{n}%
		\CYRSH
	}%
}
\begin{document}
	\begin{center}
		\textbf{AN AVERAGE VERSION OF CILLERUELO'S CONJECTURE FOR FAMILIES OF $S_n$-POLYNOMIALS OVER A NUMBER FIELD }
	\end{center}
\begin{center}
	Ilaria Viglino
\end{center}
\begin{center}
	\textbf{Abstract}
\end{center}

\begin{addmargin}[2em]{2em}

\fontsize{10pt}{12pt}\selectfont
	For $ f\in\Z[X] $ an irreducible polynomial of degree $ n $, the Cilleruelo's conjecture states that$$\log(\lcm(f(1),\dots,f(M)))\sim(n-1)M\log M$$as $ M\rightarrow+\infty $, where $ \lcm(f(1),\dots,f(M)) $ is the least common multiple of $f(1),\dots,f(M)$. It's well-known for $ n=1 $ as a consequence of Dirichlet's Theorem for primes in arithmetic progression, and it was proved by Cilleruelo in \cite{Cil} for quadratic polynomials. Recently the conjecture was shown by Rudnick and Zehavi in \cite{RZ} for a large family of polynomials of any degree. We want to investigate an average version of the conjecture for $S_n$-polynomials with integral coefficients over a fixed extension $K/\Q$ by considering the least common multiple of ideals of $\mathcal{O}_K$.
\end{addmargin}
\normalsize

\section{Introduction}

We fix a field extension $K/\Q$ of degree $d$. Let $n\ge2$ and let $N$ be positive integers. We consider monic polynomials with coefficients in $\mathcal{O}_K$ of the form$$f(X)=X^n+\alpha_{n-1}X^{n-1}+\dots+\alpha_0.$$Choose an ordered integral basis $(\omega_1,\dots,\omega_d)$ of $\mathcal{O}_K$ over $\Z$. We have, for all $k=0,\dots,n-1$,$$\alpha_k=\sum_{i=1}^{d}a_{i}^{(k)}\omega_i$$for unique $a_{i}^{(k)}\in\Z$. We view the coefficients $a_{i}^{(k)}$ as independent, identically distributed random variables taking values uniformly in $\{-N,\dots,N\}$. Define the \textit{height} of $\alpha_k$ as $\mbox{ht}(\alpha_k)=\max_{i}|a_{i}^{(k)}|$ and the \textbf{height} of the polynomial $f$ to be$$\mbox{ht}(f)=\max_{k}\mbox{ht}(\alpha_{k}).$$For $n\ge2$, $N>0$ define$$\mathscr{P}_{n,N}^{0}(K)=\{f\in\mathcal{O}_K[X]:\mbox{ht}(f)\le N,\ G_{K_f/K}\cong S_n\},$$where $K_f$ is the splitting field of $f$ over $K$ inside a fixed algebraic closure $ \Qal $ of $ \Q $. We call these polynomials \textbf{$S_n$-polynomials over $K$}, or simply $S_n$-polynomials when there is no need to specify the base field.\\

\subsection{Main result}
	If $\lambda_1,\dots,\lambda_s$ are elements of $\mathcal{O}_K$, we can factorize the ideals they generate in the Dedekind domain $\mathcal{O}_K$ as$$\lambda_i\mathcal{O}_K=\prod_{\wp\subseteq\mathcal{O}_K}\wp^{\beta_{\wp}^i}$$for all $i$, where $\beta_{\wp}^i=0$ for all but finitely many $\wp$. The \textit{least common multiple} of $\lambda_1,\dots,\lambda_s$ is the ideal of $\mathcal{O}_K$ defined as the leatest common multiple of the ideals $\lambda_1\mathcal{O}_K,\dots,\lambda_s\mathcal{O}_K$ in the Dedekind domain $\mathcal{O}_K$, that is,$$\mbox{lcm}(\lambda_1,\dots,\lambda_s)=\underset{\wp\subseteq\mathcal{O}_K}{\bigcap}\wp^{\max\{\beta_{\wp}^1,\dots,\beta_{\wp}^s\}}=\underset{\wp\subseteq\mathcal{O}_K}{\prod}\wp^{\max\{\beta_{\wp}^1,\dots,\beta_{\wp}^s\}}.$$Let $n\ge3$. It is known that the polynomials in $\mathscr{P}_{n,N}^{0}(K)$ are generically irreducible.
	\begin{thm}\label{thm2}
			Let $N,M>0$ such that$$M(\log M)^\ell\ll N=o\left( M\frac{\log M}{\log\log M}\right) $$for some $0<\ell<1$. Then \begin{multline*}
			\log|N_{K/\Q}(\lcm(f(\lambda):\lambda\in\mathcal{O}_K,\ N_{K/\Q}\lambda\le M)|)=(n-1)M\log M\\
			+O\left( M\frac{\log M}{\log\log M}+N\log\log M\right),
		\end{multline*}asymptotically almost surely when $N,M\rightarrow+\infty$.
	\end{thm}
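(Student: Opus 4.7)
The plan is to adapt the Rudnick--Zehavi strategy of \cite{RZ} to the ring $\mathcal{O}_K$ and combine it with an averaging argument over $f\in\mathscr{P}^0_{n,N}(K)$. First I would write
$$L(f,M):=\log|N_{K/\Q}(\lcm(f(\lambda):\lambda\in\mathcal{O}_K,\ N_{K/\Q}\lambda\le M))|=\sum_{\wp}m_\wp(f)\log N\wp,$$
with $m_\wp(f)=\max_\lambda v_\wp(f(\lambda))$, and compare it with the total log-norm
$$\Sigma(f,M):=\sum_{\lambda:\,N_{K/\Q}\lambda\le M}\log|N_{K/\Q}f(\lambda)|=\sum_\wp S_\wp(f)\log N\wp,$$
where $S_\wp(f)=\sum_\lambda v_\wp(f(\lambda))$. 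Since $|N_{K/\Q}f(\lambda)|\asymp|N_{K/\Q}\lambda|^n$, a lattice-point estimate on $\{\lambda\in\mathcal{O}_K:N_{K/\Q}\lambda\le M\}$ yields $\Sigma(f,M)=nM\log M+O(M)$. Setting $A_k(\wp):=\#\{\lambda:N_{K/\Q}\lambda\le M,\ v_\wp(f(\lambda))\ge k\}$, one has $\Sigma(f,M)-L(f,M)=\sum_\wp\log N\wp\sum_{k\ge 1}\max(A_k(\wp)-1,0)$, so the goal reduces to showing this difference equals $M\log M+O(M\log M/\log\log M+N\log\log M)$ asymptotically almost surely.

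The main term $M\log M$ comes from $k=1$ and primes $\wp$ of moderate norm. I would split the primes into three ranges: small ($N\wp\le(\log M)^c$), medium ($(\log M)^c<N\wp\le M(\log M)^{-c}$), and large ($N\wp>M(\log M)^{-c}$). In the medium range $A_1(\wp)=\rho_f(\wp)M/N\wp+O(1)$, where $\rho_f(\wp)$ is the number of roots of $f$ in $\mathcal{O}_K/\wp$; and since $f$ is an $S_n$-polynomial, the average of $\rho_f(\wp)$ over random $f$ of height $\le N$ matches the expected number of fixed points of a uniform $\sigma\in S_n$, which is $1$. Together with Mertens' theorem for $K$, this gives $\mathbb{E}_f\sum_{\wp\text{ med}}\log N\wp\cdot(A_1(\wp)-1)=M\log M+O(M)$. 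A second-moment bound, using near-independence of $f\bmod\wp_1$ and $f\bmod\wp_2$ when $N\ge N\wp_1\cdot N\wp_2$, upgrades this expectation to an almost-sure statement. For $k\ge 2$ the expected contribution is estimated via $\Pr[\wp^k\mid f(\lambda)]=O((N\wp)^{-k})$, yielding $O(M\log\log M)$; the large-prime tail produces the $O(N\log\log M)$ piece via a sieve-type upper bound on primes with a root of $f$ in the box; and the small-prime range is absorbed by the trivial estimate $m_\wp(f)=O(\log M/\log N\wp)$.

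The two-sided hypothesis $M(\log M)^\ell\ll N=o(M\log M/\log\log M)$ is dictated by (i) the equidistribution of $f\bmod\wp$ requires $N$ to be sufficiently larger than $N\wp$ for $\wp$ in the medium range, and (ii) the large-prime tail of order $N\log\log M$ must remain smaller than the main term $M\log M$. The main obstacle is precisely the uniformity of this mod-$\wp$ equidistribution over the family $\mathscr{P}^0_{n,N}(K)$: one needs quantitative control over the first and second moments of $\rho_f(\wp)$ simultaneously, with error depending on $N/N\wp$ and on the ramification of $\wp$ in $K/\Q$. A secondary technical point is to verify that restricting to the $S_n$-subfamily does not bias these statistics, which follows from the remark preceding the theorem that $\mathscr{P}^0_{n,N}(K)$ has density one inside the set of all monic $f\in\mathcal{O}_K[X]$ of height $\le N$.
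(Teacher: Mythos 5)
Your overall strategy matches the paper's: compare $\log|N_{K/\Q}(\lcm(\cdot))|$ with the total log-norm $\sum_\lambda \log|N_{K/\Q}f(\lambda)|$, show the difference contributes $M\log M$ on average via Chebotarev-type input (Proposition~\ref{p9} in the paper), and upgrade to an almost-sure statement by a second-moment/variance computation (Proposition~\ref{prop4}) using near-independence of $f\bmod\wp_1$ and $f\bmod\wp_2$ for small primes (Lemma~\ref{l11}). The decomposition you write, $\Sigma - L = \sum_\wp \log N\wp \sum_{k\ge 1}\max(A_k(\wp)-1,0)$, is algebraically equivalent to the paper's five-term split.

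However, there is a concrete gap in your treatment of $\Sigma(f,M)$. You assert that $|N_{K/\Q}f(\lambda)|\asymp|N_{K/\Q}\lambda|^n$ and conclude $\Sigma(f,M)=nM\log M+O(M)$ by a lattice-point count, but this hides that the implied constant depends on the coefficient height, which is allowed to be as large as $N=o(M\log M/\log\log M)$. For $\lambda$ with $N_{K/\Q}\lambda$ small relative to $N$, one has $\log|N_{K/\Q}f(\lambda)|\ll\log N+\log M$, not $n\log N_{K/\Q}\lambda+O(1)$; and even for $N_{K/\Q}\lambda$ of moderate size the correction is $O(N/N_{K/\Q}\lambda)$, which when summed contributes a non-negligible error. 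The paper handles this by choosing a cutoff $A=N\log\log M/\log M$, bounding the small-$\lambda$ range trivially and the large-$\lambda$ range by $MN/A$, yielding exactly $\Sigma(f,M)=nM\log M+O(M\log M/\log\log M+N\log\log M)$. This is the origin of both error terms in the theorem, so your $O(M)$ claim is too strong and would lead to a contradiction. Relatedly, you misattribute the $O(N\log\log M)$ term to a ``sieve-type upper bound on the large-prime tail''; in the paper the large primes $q_\wp>M$ are handled via the auxiliary polynomial $G(\mu,\lambda)=(f(\mu)-f(\lambda))/(\mu-\lambda)$ and contribute only $O(M+\tfrac{M^2}{N^{d\xi}}\log M)$, which is smaller. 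You should redo the $\Sigma(f,M)$ estimate with a threshold parameter and track the coefficient dependence carefully; the remainder of your outline then aligns with the paper.
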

This result will be proved in Section 4.2. The case of linear polynomials is dealt with separately in Section 4.1, as a consequence of Dirichlet's Theorem for number fields. For degree-2 polynomials, it is possible to obtain explicit asymptotics for the least common multiple, analogously to the ones in \cite{Cil} for polynomials in $\Z[X]$. However, the latter is not a subject of the current paper. \\
Recently Cilleruelo's conjecture was shown for a large family of integer polynomials of any degree by Rudnick and Zehavi in \cite{RZ}. In particular, when restricted to the case $K=\Q$, our theorem also encompasses non-included cases in the above-mentioned result.

\section{Counting $S_n$-polynomials over $K$}
It has been proven that almost all polynomials are $S_n$-polynomials in the following sense:$$\frac{|\mathscr{P}_{n,N}^{0}(K)|}{|\mathscr{P}_{n,N}(K)|}\underset{N\rightarrow+\infty}{\longrightarrow}1.$$For instance, in the case $K=\Q$, Van der Waerden gave in \cite{Wa} an explicit error term $ O(N^{n-\frac{1}{6(n-2)\log\log N}}) $. It has improved in \cite{Gal} using large sieve to $ O(N^{-1/2}\log N) $, and more recently by Dietmann \cite{Di} using resolvent polynomials to $O(N^{-2+\sqrt{2}+\varepsilon})$. The best estimate can be found in \cite{Bh1}, who proved the following result, conjectured by van der Waerden.
\begin{namedthm*}{Theorem}[Bhargava]
	If $n\ge5$, one has, $$|\mathscr{P}_{n,N}^{0}(\Q)|=(2N)^n+O(N^{n-1}),$$as $ N\rightarrow\infty $.
\end{namedthm*}The cubic and quartic cases of van der Waerden's conjecture were proved by Chow and Dietmann in \cite{CD}.\\

In his work, Bhargava used a combination of algebraic techniques and Fourier analysis over finite fields. In the below theorem, we generalize this result for polynomials in $\mathscr{P}_{n,N}^{0}(K)$, for certain values of $n$ and $d$. We also use large sieve over number fields to prove the upper bound for all $n\ge3$ and $d\ge1$.
\begin{thm}\label{thm1}
	Let $d\ge1$ and $n\ge2$. There exist positive constants $\theta$ and $\theta_n$ such that the number of non $S_n$-polynomials is$$|\mathscr{P}_{n,N}(K)\setminus\mathscr{P}_{n,N}^{0}(K)|\ll_{n,K}N^{d(n-\theta)}(\log N)^{\theta_n},$$as $N\rightarrow+\infty$. In particular,\begin{enumerate}
		\item[$\paruno \ai\pardue$] if $n=2$, we can choose $\theta=1$, $\theta_2=1$;
		\item[$\paruno \bi\pardue$] for all $d\ge1$ and $n\ge3$ the above estimate holds with $\theta=1/2$ and $\theta_n=1-\gamma_n$, where $\gamma_n\sim(2\pi n)^{-1/2}$;
		\item[$\paruno \ci\pardue$] if one of the following conditions is satisfied, we can take $\theta=1$ and $\theta_n=0$:
		\begin{enumerate}
			\item[$\bullet$] $d=1$, $n=3,4$;
			\item[$\bullet$] $\left[ \frac{2d+\sqrt{4d^2-2d}}{d}\right]+1\le n\le 5$;
			\item[$\bullet$] $d\le 23$, $2(2d+1)\le n\le 94$;
			\item[$\bullet$] $n\ge\max(95,2(2d+1))$.
		\end{enumerate}
	\end{enumerate}
\end{thm}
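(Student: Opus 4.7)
The plan is to decompose $\mathscr{P}_{n,N}(K)\setminus\mathscr{P}_{n,N}^{0}(K)$ into the subset $\mathscr{R}_{n,N}$ of monic reducible polynomials and the subset $\mathscr{I}_{n,N}$ of monic irreducible polynomials whose Galois group over $K$ is a proper transitive subgroup of $S_n$, and bound each part separately. Since $S_2$ has no proper transitive subgroup, $\mathscr{I}_{2,N}$ is empty and case (a) reduces to counting reducible quadratics; in general the two bounds combine to give (b) and (c).

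For $\mathscr{R}_{n,N}$, I would use a Mignotte-type factorization inequality in $\mathcal{O}_K[X]$: if $f=gh$ with $g,h\in\mathcal{O}_K[X]$ monic of positive degrees, embedding $K$ into each of its archimedean completions and applying Mignotte coordinatewise yields $\mbox{ht}(g)\cdot\mbox{ht}(h)\ll_n\mbox{ht}(f)$. Partitioning pairs $(g,h)$ dyadically by $A=\mbox{ht}(g)$, $B=\mbox{ht}(h)$, the count is bounded by $\sum_{k=1}^{n-1}\sum_{AB\le C_n N}A^{dk}B^{d(n-k)}\ll_{n,K}N^{d(n-1)}\log N$. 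For $n=2$ one can alternatively count directly the monic quadratics whose discriminant $\alpha_1^2-4\alpha_0$ is a square in $\mathcal{O}_K$, establishing (a).

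For $\mathscr{I}_{n,N}$, I would invoke Chebotarev's density theorem: any proper transitive $H\subsetneq S_n$ omits some cycle type $\tau$ (for instance, $H$ cannot contain both an $n$-cycle and a transposition), so for $f\in\mathscr{I}_{n,N}$ the set of prime ideals $\wp$ at which the factorization type of $\overline{f}\pmod\wp$ equals $\tau$ has density zero. I would then apply the large sieve over $\mathcal{O}_K$, in the form developed by Kowalski and Zywina, sieving by prime ideals of norm $\asymp N^{1/(n-1)}$: in each residue field the polynomials of cycle type $\tau$ form a fixed positive proportion of all reductions, producing a saving of order $N^{d/2}$ over the trivial bound $N^{dn}$ and giving $|\mathscr{I}_{n,N}|\ll_{n,K}N^{d(n-1/2)}(\log N)^{1-\gamma_n}$, where the log refinement $1-\gamma_n$ arises from Stirling's formula applied to the proportion of $\tau$-type elements of $S_n$, with $\gamma_n\sim(2\pi n)^{-1/2}$. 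This proves (b).

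For (c) with $\theta=1$, $\theta_n=0$, I would combine the resolvent polynomial methods of Chow--Dietmann (for $d=1$, $n=3,4$) with Bhargava's invariant-theoretic and Fourier-analytic techniques from his proof of van der Waerden's conjecture over $\mathbb{Q}$; the numerical conditions in (c) correspond precisely to those ranges in which the relevant orbit counts and Fourier estimates on $\mathcal{O}_K/\wp$ extend uniformly in $d$ and yield the sharp exponent $d(n-1)$. The main obstacle will be to push the large sieve through over $\mathcal{O}_K$ with explicit constants uniform in $n$, which is needed to extract the Stirling asymptotic $\gamma_n\sim(2\pi n)^{-1/2}$ in (b); and to verify that Bhargava's orbit parametrisation specializes correctly to $\mathcal{O}_K$ for general $d$ in the ranges of (c).
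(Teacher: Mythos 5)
Your overall decomposition into reducible polynomials plus irreducible ones with a proper transitive Galois group, with Mignotte-type height bounds giving $N^{d(n-1)}\log N$ for reducibles and large sieve giving the exponent $d(n-1/2)$ for the transitive-but-not-$S_n$ case, is the right skeleton, and your route to part (b) agrees with the paper's stated approach, which explicitly cites Gallagher's large-sieve method and says it applies large sieve over number fields for all $n\ge 3$, $d\ge 1$. Note, however, that the present paper does not actually prove Theorem~\ref{thm1}: it defers the proof to the author's companion paper \cite{Vi}, and gives only an outline of the ingredients.

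Where your proposal diverges, and is in fact too vague to be credible as written, is part (c). The paper is explicit that the case splitting by $n$ and $d$ comes from the need for upper bounds on the number of degree-$n$ extensions of $K$ with bounded discriminant, and that different asymptotic counts for such fields are known in different degree ranges (Schmidt-type bounds for small $n$, bounds of Lemke Oliver--Thorne type for the window $2(2d+1)\le n\le 94$, and yet another bound for $n\ge 95$); after separating imprimitive from primitive proper Galois groups, the primitive case is reduced to a field-counting problem. Your proposal instead appeals to Bhargava's orbit parametrisations and Fourier analysis over residue fields, which is the mechanism behind his $\Q$-version of van der Waerden and not obviously what produces the peculiar thresholds $\left[\frac{2d+\sqrt{4d^2-2d}}{d}\right]+1$ and $n\ge\max(95,2(2d+1))$. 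In particular, your sketch gives no account of why those specific numerical cutoffs appear, which is precisely the content of (c); a correct proof needs to (i) split the non-$S_n$ irreducible case into imprimitive and primitive Galois groups, and (ii) invoke the appropriate discriminant-counting theorem in each regime. Without that step, your argument cannot recover $\theta=1$, $\theta_n=0$ beyond the $d=1$, $n=3,4$ case already due to Chow--Dietmann.

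Two smaller remarks. First, in (a) you should check that the direct count of monic quadratics over $\mathcal{O}_K$ with square discriminant really yields the logarithmic factor $\theta_2=1$ rather than a slightly different power; the Mignotte route already produces the $\log N$, so it is safer. Second, for (b) the refinement $\theta_n=1-\gamma_n$ with $\gamma_n\sim(2\pi n)^{-1/2}$ is not automatic from ``sieving by one omitted cycle type''; it requires optimizing over which collection of cycle types to sieve and then applying Stirling to the resulting proportion, so your one-line justification understates the work needed.
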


The first bullet of (c) of Theorem \ref{thm1} is van der Waerden's conjecture for cubic and quartic fields, which has already been proven by Chow and Dietmann in \cite{CD}. The other bullets of (c) of Theorem \ref{thm1} are a generalization of this result for polynomials with integral coefficients in a number field $K$, for some values of $d$ and $n$.
The reason why we split different values of $n$ and $d$ is that in order to estimate the number of polynomials having primitive Galois group, we need an upper bound on the number of field extensions of fixed degree, with bounded discriminant. Different asymptotic formulas as known according to the degree. Finally, for (b) we apply large sieve to the set $\mathscr{P}_{n,N}$ (see \cite{Gal} for the analogous result for $d=1$). A complete proof of this result can be found in \cite{Vi}.

\section{Prime splitting densities}

From now on, according to Theorem \ref{thm1}, we set $\xi>0$ so that the number of non $S_n$-polynomials in $\mathscr{P}_{n,N}(K)$ is $\ll_{n,K}N^{d(n-\xi)}$. Specifically, for all $n\ge3,\ d\ge1$ we can take $\xi=\frac{1}{2}-\varepsilon$ for an $\varepsilon>0$ arbitrary small. If moreover $n$ is as in (c) of Theorem \ref{thm1}, put $\xi=1$.
	
For every splitting type $ r $, and every prime $\wp$ of norm $q_\wp$, recall that we denoted by $X_{n,r,\wp}$ the set of polynomials in $\mathbb{F}_{q_\wp}$ with square-free factorization of type $r$.
The following key fact is what we'll use to estimate the error term in the asymptotic of the expectation $\mathbb{E}_N(\pi_{f,r}(x))$ of $\pi_{f,r}(x)$ and its powers.
\begin{lemma}\label{l11}
	Let $k\ge 1$, $\wp_1,\dots, \wp_k$ primes and $g_i\in X_{n,r,\wp_i}$ for all $i=1,\dots,k$. Then if $q_{\wp_i}<N^{d\xi/kn}$ for all $i=1,\dots,k$ $$\p_N(f\in\mathscr{P}_{n,N}^{0}:f\equiv g_i\mod \wp_i\ \ \forall i=1,\dots,k)=\frac{1}{(q_{\wp_1}\dots q_{\wp_k})^n}+O_{n,K}(N^{-d\xi})$$as $N\rightarrow+\infty$.

\end{lemma}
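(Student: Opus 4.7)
The plan is to estimate the desired probability first in the full ambient set $\mathscr{P}_{n,N}(K)$ by coefficient-wise lattice-point counting, and then pass to the subset $\mathscr{P}^0_{n,N}(K)$ of $S_n$-polynomials using Theorem \ref{thm1}.

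Since the $\wp_i$ are distinct, the Chinese Remainder Theorem repackages the $k$ congruences as a single condition $f \equiv g \pmod{\mathfrak{a}}$, where $\mathfrak{a} = \prod_i \wp_i$ has norm $N(\mathfrak{a}) = \prod_i q_{\wp_i}$ and $g\in(\mathcal{O}_K/\mathfrak{a})[X]$ is determined by the $g_i$. Because the $n$ coefficients of $f$ are sampled independently, the count $\#\{f \in \mathscr{P}_{n,N}(K) : f \equiv g \pmod{\mathfrak{a}}\}$ factors as $\prod_{j=0}^{n-1} C_j$, where each $C_j$ counts $\alpha \in \mathcal{O}_K$ with $\mbox{ht}(\alpha) \le N$ in a single fixed residue class modulo $\mathfrak{a}$. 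Using the $\Z$-basis $\omega_1, \ldots, \omega_d$ to identify $\mathcal{O}_K$ with $\Z^d$, each $C_j$ becomes a count of points of $\Z^d$ in the cube $[-N,N]^d$ lying in a coset of the index-$N(\mathfrak{a})$ sublattice cut out by $\mathfrak{a}$; I would apply the Hermite normal form to obtain
\[
C_j = \frac{(2N+1)^d}{N(\mathfrak{a})} + O_K(N^{d-1})
\]
uniformly in the coset and in $\mathfrak{a}$. Since the hypothesis $N(\mathfrak{a}) < N^{d\xi/n}$ keeps the per-coefficient relative error $N(\mathfrak{a})/N$ small, multiplying should give
\[
\#\{f \in \mathscr{P}_{n,N}(K) : f \equiv g \pmod{\mathfrak{a}}\} = \frac{(2N+1)^{dn}}{N(\mathfrak{a})^n} + O_{n,K}\!\left(\frac{(2N+1)^{dn}}{N\cdot N(\mathfrak{a})^{n-1}}\right).
\]

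To transfer to $\mathscr{P}^0_{n,N}(K)$, I would invoke Theorem \ref{thm1}: the bound $|\mathscr{P}_{n,N}(K) \setminus \mathscr{P}^0_{n,N}(K)| \ll_{n,K} N^{d(n-\xi)}$ trivially dominates the non-$S_n$ contribution to the numerator and yields $|\mathscr{P}^0_{n,N}(K)| = (2N+1)^{dn}(1 + O(N^{-d\xi}))$ in the denominator, so dividing would produce $1/N(\mathfrak{a})^n + O_{n,K}(N^{-d\xi})$. The main technical point will be this final calibration: the cutoff $q_{\wp_i} < N^{d\xi/(kn)}$ is designed precisely so that $N(\mathfrak{a})^n < N^{d\xi}$, ensuring that the main term $1/N(\mathfrak{a})^n$ stays at or above the absolute error $N^{-d\xi}$, and that both the coefficient-wise lattice remainder and the Theorem \ref{thm1} defect fall within the same $N^{-d\xi}$ budget after normalization. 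Keeping the Hermite normal form constant uniform in $\mathfrak{a}$ is what allows the error term to depend only on $n$ and $K$, as required.
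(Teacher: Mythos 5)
Your proof takes essentially the same route as the paper's: count the polynomials in the ambient box $\mathscr{P}_{n,N}(K)$ satisfying the congruence by a coefficient-wise lattice-point count, pass to $\mathscr{P}^0_{n,N}(K)$ using the Theorem \ref{thm1} bound on non-$S_n$-polynomials, and divide. The only difference is technical: the paper treats $k=1$ by hand---choosing a $\Z$-basis of $\mathcal{O}_K$ whose first $f_\wp$ elements reduce to an $\mathbb{F}_p$-basis of $\mathcal{O}_K/\wp$, so that $f\equiv g\pmod\wp$ becomes $f_\wp$ congruences mod $p$ on the coordinates---and then invokes CRT for $k>1$, whereas you combine the moduli into $\mathfrak{a}=\prod_i\wp_i$ from the start and use Hermite normal form, which handles a general modulus uniformly and makes the $O_{n,K}(N^{d-1})$ per-coefficient error visibly uniform in $\mathfrak{a}$.
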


\begin{proof}We prove the case $k=1$. An application of the Chinese Remainder Theorem leads to the result for $k>1$.\\
	Let $ g=\sum_{i=1}^{n}g_iX^i $ and $ f=\sum_{i=1}^{n}f_iX^i $. Now $(\omega_1,\dots,\omega_d)$ is an integral basis of $\mathcal{O}_K$ over $\Z$; by applying linear transformations we can assume that the reduction modulo $\wp$ of $(\omega_1,\dots,\omega_{f_\wp})$ is a basis for the $\mathbb{F}_p$-vector space $\mathcal{O}_K/\wp$. Then write for every $i=0,\dots,n-1$\begin{align*}
		g_i&=\sum_{j=1}^{f_\wp}b_j^{(i)}\omega_j\mod \mathbb{F}_{q_\wp},\\
		f_i&=\sum_{j=1}^{f_\wp}a_j^{(i)}\omega_j\mod \mathbb{F}_{q_\wp},
	\end{align*}where $a_j^{(i)},b_j^{(i)}\in\Z$ for all $i$ and $j$.
	
	One has $ f\equiv g\mod \wp $ if and only if $ f_i= g_i $ in $\mathbb{F}_{q_\wp}$ for $ i=0,\dots,n-1 $.
	This means $ a_j^{(i)}\equiv b_j^{(i)} \mod p$, that is $a_j^{(i)}= b_j^{(i)}+p k_j^{(i)}$ for some $ k_j^{(i)}\in\Z $. Since the height of $f$ is less or equal than $N$, for $j=1,\dots,f_\wp$ and for all $i=0,\dots,n-1$ we have$$\frac{-N-b_j^{(i)}}{p}\le k_j^{(i)}\le\frac{N-b_j^{(i)}}{p};$$so for each of the coefficients $a_{1}^{(i)},\dots,a_{f_\wp}^{(i)}$ we have $$\left[ \frac{N-b_j^{(i)}}{p}\right] -\left[\frac{-N-b_j^{(i)}}{p}\right]=\frac{2N}{p}+O(1)$$choices. Whereas for each coefficient $a_{f_\wp+1}^{(i)},\dots,a_{d}^{(i)}$ there are $2N$ choices. Therefore for each coefficient $f_i$ of $f$ one has$$\Big(\frac{2N}{p}+O(1)\Big)^{f_\wp}\cdot(2N)^{d-f_\wp}=\frac{(2N)^d}{q_\wp}+O(N^{d-1})$$possibilities. It turns out that
	$$|\{f\in\mathscr{P}_{n,N}:f\equiv g\mod \wp\}|=\frac{(2N)^{nd}}{q_\wp^n}+O(N^{dn-1}),$$so by Theorem \ref{thm1}\begin{align*}
		|\{f\in\mathscr{P}_{n,N}^0:f\equiv g\mod \wp\}|&=\underset{f\equiv g\tiny\mbox{ mod }\wp}{\sum_{f\in\mathscr{P}_{n,N}}}1+O\Big(\sum_{f\notin\mathscr{P}_{n,N}^{0}}1\Big)\\&=\frac{(2N)^{nd}}{q_\wp^n}+O(N^{d(n-\xi)}).
	\end{align*}
	As long as $ q_\wp^n<N^{d\xi} $, we get\begin{align*}
		\frac{1}{|\mathscr{P}_{n,N}^{0}|}\underset{f\equiv g\tiny\mbox{ mod }\wp}{\sum_{f\in\mathscr{P}_{n,N}^{0}}}1&=\frac{1}{(2N)^{nd}}(1+O(N^{d(n-\xi)}))\Big(\frac{(2N)^{nd}}{q_\wp^n}+O(N^{d(n-\xi)})\Big)\\
		&=(1+O(N^{-d\xi}))\Big(\frac{1}{q_\wp^n}+O(N^{-d\xi})\Big)\\
		&=\frac{1}{q_\wp^n}+O(N^{-d\xi}).
	\end{align*}
\end{proof}

\begin{prop}\label{p9} One has, for all primes $\wp$ with $ q_\wp<N^{d\xi/(n+1)} $,
	\begin{enumerate}
		\item[$\paruno\ai\pardue$] $ \p_N(\mathbbm{1}_{f,r}(\wp)=1)=\mathbb{E}_N(\mathbbm{1}_{f,r}(\wp))=\delta(r)+\frac{C_{r}}{q_\wp}+O\Big(\frac{1}{q_\wp^2}+q_\wp^nN^{-d\xi}\Big) $,\\for some explicit constant $ C_{r}$;
		\item[$\paruno\bi\pardue$] $ \sigma^2_N(\mathbbm{1}_{f,r}(\wp))=(\delta(r)-\delta(r)^2)+\frac{C_{r}(1-2\delta(r))}{q_\wp}+O\Big(\frac{1}{q_\wp^2}+q_\wp^nN^{-d\xi}\Big) $. 
	\end{enumerate}It follows that, for $ x<N^{d\xi/(n+1)} $,
	\begin{enumerate}
		\item[$\paruno\ci\pardue$] $ \mathbb{E}_N(\pi_{f,r}(x))=\delta(r)\pi_K(x)+C_{r}\log\log x+O_{n,K}(1),$
	\end{enumerate}as $x,N\rightarrow+\infty$.
\end{prop}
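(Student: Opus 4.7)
The plan is to reduce (a) to Lemma \ref{l11} applied to each residue class $g\in X_{n,r,\wp}$, combined with the standard second-order expansion for the number of monic polynomials over $\mathbb{F}_{q_\wp}$ with prescribed factorization type. Parts (b) and (c) then follow by purely formal manipulations together with a Mertens-type estimate for prime ideals.

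For (a), the event $\{\mathbbm{1}_{f,r}(\wp)=1\}$ is the disjoint union, over $g\in X_{n,r,\wp}$, of the events $\{f\equiv g\bmod\wp\}$. Under the hypothesis $q_\wp<N^{d\xi/(n+1)}$ (which is strictly stronger than the hypothesis $q_\wp<N^{d\xi/n}$ of Lemma \ref{l11} for $k=1$), summing that lemma over $g$ gives
$$\p_N(\mathbbm{1}_{f,r}(\wp)=1)=|X_{n,r,\wp}|\left(\frac{1}{q_\wp^n}+O_{n,K}(N^{-d\xi})\right).$$
The count $|X_{n,r,\wp}|$ admits the refinement
$$|X_{n,r,\wp}|=\delta(r)\,q_\wp^n+C_r\,q_\wp^{n-1}+O(q_\wp^{n-2})$$
for an explicit $C_r$ depending only on $r$, obtained by applying the exponential formula to the Prime Polynomial Theorem over $\mathbb{F}_{q_\wp}$ (i.e.\ grouping monic irreducibles by degree according to the cycle type $r$). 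Multiplying out yields the formula in (a).

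Part (b) is immediate from (a): since $\mathbbm{1}_{f,r}(\wp)$ is $\{0,1\}$-valued one has $\sigma^2_N(\mathbbm{1}_{f,r}(\wp))=p(1-p)$ with $p=\mathbb{E}_N(\mathbbm{1}_{f,r}(\wp))$, and expanding $p(1-p)$ using the formula in (a) produces
$$p(1-p)=\delta(r)(1-\delta(r))+\frac{C_r(1-2\delta(r))}{q_\wp}+O\!\left(\frac{1}{q_\wp^2}+q_\wp^n N^{-d\xi}\right),$$
which is exactly (b). For (c), by linearity of expectation,
$$\mathbb{E}_N(\pi_{f,r}(x))=\sum_{q_\wp\le x}\mathbb{E}_N(\mathbbm{1}_{f,r}(\wp)),$$
and since $x<N^{d\xi/(n+1)}$ every term is covered by (a). The three pieces are: the main term $\delta(r)\pi_K(x)$; the contribution $C_r\sum_{q_\wp\le x}1/q_\wp=C_r\log\log x+O(1)$, from the Mertens-type estimate for prime ideals (a consequence of Landau's prime ideal theorem, after discarding the bounded contribution of primes of residue degree $\ge 2$); and the error $\sum_{q_\wp\le x}\bigl(1/q_\wp^2+q_\wp^n N^{-d\xi}\bigr)=O(1)+O(x^{n+1}N^{-d\xi}/\log x)=O_{n,K}(1)$, where the last bound explicitly uses $x<N^{d\xi/(n+1)}$ and clarifies why the hypothesis is a factor $q_\wp$ stronger than that of Lemma \ref{l11}.

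The main technical point, and the step that deserves most care, is the second-order count $|X_{n,r,\wp}|=\delta(r)q_\wp^n+C_r q_\wp^{n-1}+O(q_\wp^{n-2})$: the leading density $\delta(r)$ is classical, but identifying the explicit constant $C_r$ with the correct error term requires working with the exponential generating function for monic squarefree polynomials of a given factorization type and substituting the exact formula $|\{\text{monic irred.\ of degree }m\text{ over }\mathbb{F}_{q_\wp}\}|=\frac{1}{m}\sum_{e\mid m}\mu(m/e)q_\wp^e$, rather than just its leading term. Once this refined count is in hand, the rest of the proposition reduces to bookkeeping.
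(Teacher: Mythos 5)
Your proposal is correct and follows essentially the same route as the paper's proof: decompose $\{\mathbbm{1}_{f,r}(\wp)=1\}$ into the disjoint events $\{f\equiv g\bmod\wp\}$ for $g\in X_{n,r,\wp}$, apply Lemma \ref{l11} to each, feed in the second-order expansion $|X_{n,r,\wp}|=\delta(r)q_\wp^n+C_r q_\wp^{n-1}+O(q_\wp^{n-2})$, and then sum over $\wp$ for (c) using the prime-ideal analogue of Mertens' theorem. The only cosmetic difference is that you invoke the exponential formula for the squarefree-factorization-type count, whereas the paper writes $|X_{n,r,\wp}|=\prod_k\binom{A_{q_\wp,k}}{r_k}$ and expands the binomials directly — these are the same computation — and your explicit bookkeeping of the tail $\sum_{q_\wp\le x}q_\wp^n N^{-d\xi}\ll x^{n+1}N^{-d\xi}/\log x$ makes the role of the hypothesis $x<N^{d\xi/(n+1)}$ clearer than the paper does.
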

Hence, the \textit{normal order} of $ \pi_{f,r}(x) $ is $ \delta(r)\pi_K(x) $, which means that $ \pi_{f,r}(x)\sim \delta(r)\pi_K(x)$ for almost all $ f $, as $ x\rightarrow+\infty $ and $ N $ large enough. 
\begin{proof}
	
	Once fixed a prime $ \wp $,
	\begin{align*}
		\mathbb{E}_N(\mathbbm{1}_{f,r}(\wp))&=\frac{1}{|\mathscr{P}_{n,N}^{0}|}\sum_{f\in\mathscr{P}_{n,N}^{0}}\mathbbm{1}_{f,r}(\wp)\\
		&=\frac{1}{|\mathscr{P}_{n,N}^{0}|}\underset{f\tiny{\mbox{ of splitting type }}r\tiny{\mbox{ mod }} \wp}{\sum_{f\in\mathscr{P}_{n,N}^{0}}}1\\
		&=\frac{1}{|\mathscr{P}_{n,N}^{0}|}\sum_{g\in X_{n,r,\wp}}\underset{f\equiv g\tiny\mbox{ mod }\wp}{\sum_{f\in\mathscr{P}_{n,N}^{0}}}1.
	\end{align*}

	On the other hand,$$|X_{n,r,\wp}|=\prod_{k=1}^{n}\binom{A_{q_\wp,k}}{r_k},$$where $ A_{q_\wp,k} $ is the number of degree-$ k $ irreducible polynomials in $ \mathbb{F}_{q_\wp}[X] $, which, by the M\"{o}bius inversion formula, equals$$\frac{1}{k}\sum_{d|k}\mu(d)q_{\wp}^{k/d}=\frac{q_\wp^k}{k}+O(q_\wp^{\alpha_k}),$$where $\alpha_k=1$ if $k=2$, and $\alpha_k< k-1$ if $k>2$. One has, for all $ k\ge2$\begin{align*}
		\binom{A_{q_\wp,k}}{r_k}&=\frac{A_{q_\wp,k}(A_{q_\wp,k}-1)\dots(A_{q_\wp,k}-r_k+1)}{r_k!}\\
		&=\frac{1}{r_k!}\Big(\frac{q_\wp^k}{k}+O(q_\wp^{\alpha_k})\Big)\dots\Big(\frac{q_\wp^k}{k}-r_k+1+O(q_\wp^{\alpha_k})\Big).
	\end{align*}It turns out that$$
	\binom{A_{q_\wp,k}}{r_k}=\begin{cases}
		\frac{1}{r_1!}q_\wp(q_\wp-1)\dots(q_\wp-r_1+1)&\mbox{ if }k=1\\
		\frac{1}{r_2!2^{r_2}}q_\wp^{2r_2}+C(r_2)q_\wp^{2r_2-1}+O(q_\wp^{2r_2-2})&\mbox{ if }k=2\\
		\frac{1}{r_k!k^{r_k}}q_\wp^{kr_k}+O(q_\wp^{k(r_k-1)+\alpha_k})&\mbox{ if }k>1.
	\end{cases}
	$$Hence\begin{multline*}
		|X_{n,r,\wp}|=\frac{1}{r_1!}q_\wp(q_\wp-1)\dots(q_\wp-r_1+1)\frac{1}{r_2!2^{r_2}}(q_\wp^{2r_2}+C(r_2)q_\wp^{2r_2-1}+O(q_\wp^{2r_2-2}))\\\prod_{k=3}^{n}(\frac{1}{r_k!k^{r_k}}q_\wp^{kr_k}+O(q_\wp^{k(r_k-1)+\alpha_k}))\\
		=\delta(r)q_\wp^{n}+C_rq_\wp^{n-1}+O(q_\wp^{n-2}),
	\end{multline*}where $ C_r=-\delta(r)C(r_2)\frac{(r_1+1)(r_1+2)}{2r_1!}.$
	
	By Lemma \ref{l11}, for $ q_\wp^{n+1}<N^{d\xi}$,\begin{align*}
		\mathbb{E}_N(\mathbbm{1}_{f,r}(\wp))&=(\delta(r)q_\wp^n+C_{r}q_\wp^{n-1}+O(q_\wp^{n-2}))\Big(\frac{1}{q_\wp^n}+O(N^{-d\xi})\Big)\\
		&=\delta(r)+\frac{C_{r}}{q_\wp}+O\Big(\frac{1}{q_\wp^2}+q_\wp^nN^{-d\xi}\Big),
	\end{align*}which proves (a) and (b) follows by definition.
	
	For (c), by linearity, we simply have to sum over all primes $\wp$ with $N_{K/\Q}\wp \le x $ and use the estimate$$\sum_{N_{K/\Q}\wp\le x}\frac{1}{N_{K/\Q}\wp}=\log\log x+O(1)$$to get$$\mathbb{E}_N(\pi_{f,r}(x))=\delta(r)\pi_K(x)+C_{r}\log\log x+O(1+\pi_K(x)^{n+1}N^{-d\xi})$$as long as$$\pi_K(x)^{n+1}N^{-d\xi}=o(\log\log x).$$If moreover $ x<N^{d\xi/(n+1)} $, then the term $ \pi_K(x)^{n+1}N^{-d\xi} $ is negligible.
\end{proof}

	\section{The Cilleruelo's conjecture for integral polynomials over $K$}

\subsection{Linear polynomials}	

For $ f\in\Z[X] $ an irreducible polynomial of degree $ n $, the Cilleruelo's conjecture states$$\log(\lcm(f(1),\dots,f(M)))\sim(n-1)M\log M$$as $ M\rightarrow+\infty $, where $ \lcm(f(1),\dots,f(M)) $ is the least common multiple of $f(1),\dots,f(M)$. It's well-known for $ n=1 $ by exploiting Dirichlet's Theorem for primes in arithmetic progression (see \cite{BKS} for a proof), to get an asymptotic estimate when $f(X)=Xk+h$, $(h,k)=1$:$$\psi_f(N)\sim N\frac{k}{\varphi(k)}\underset{(n,k)=1}{\sum_{1\le n\le k}}\frac{1}{n},$$as $N\rightarrow+\infty$.\\

We recall the definition of ray class group and we fix some notations. A \textbf{modulus} $\mathfrak{m}=\prod_{\wp}\wp^{\mathfrak{m}(\wp)}$ on $K$ is a function $\mathfrak{m}:\{\mbox{primes of }K\}\rightarrow\Z$ such that $$\begin{cases}
	\mathfrak{m}(\wp)\ge0\mbox{ for all }\wp,\\
	\mathfrak{m}(\wp)=0\mbox{ for all but finitely many }\wp,\\
	\mathfrak{m}(\wp)=0\mbox{ or }1\mbox{ if }\wp\mbox{ is real},\\
	\mathfrak{m}(\wp)=0\mbox{ if }\wp\mbox{ is complex}.
\end{cases}$$Let
$K_{\mathfrak{m},1}=\{a\in K^{\times}:\mbox{ord}_\wp(a-1)\ge\mathfrak{m}(\wp)\mbox{ for }\wp|\mathfrak{m}\mbox{ finite},\ a_\wp>0\mbox{ for }\wp|\mathfrak{m}\mbox{ real}\}$. The \textbf{ray class group modulo} $\mathfrak{m}$ is$$C_{\mathfrak{m}}=I^{S(\mathfrak{m})}/K_{\mathfrak{m},1},$$where $I^{S(\mathfrak{m})}$ is the free abelian group generated by the prime ideals not in $\mathfrak{m}$.\\
Let $\mathfrak{c}\in I^{S(\mathfrak{m})}$; the corresponding Chebyshev's function is$$\theta(x;\mathfrak{c},\mathfrak{m})=\underset{\wp\sim\mathfrak{c}\tiny\mbox{ in }C_{\mathfrak{m}}}{\sum_{N_{K/\Q}\wp\le x}}\log (N_{K/\Q}\wp)=\frac{x}{h_{\mathfrak{m}}}+o(x),$$as $x\rightarrow+\infty$, where $h_{\mathfrak{m}}$ is the \text{ray class number}, i.e. $h_{\mathfrak{m}}=|C_{\mathfrak{m}}|$.\\
This is equivalent to saying that the set of prime ideals $T$ of $K$ congruent to $\mathfrak{c}$ in $C_{\mathfrak{m}}$ has Dirichlet density $1/h_{\mathfrak{m}}$:$$\sum_{\wp\in T}\frac{1}{N_{K/\Q}\wp}\sim\frac{1}{h_{\mathfrak{m}}}\log\left(\frac{1}{s-1} \right),$$as $s\downarrow 1$.\\
To achieve explicit error terms, we want effective versions of the above asymptotic. We state here both a conditional and unconditional results proved by Lagarias and Odlyzko in 1977 \cite{LO}.
	\begin{thm}\label{thm3}
	Assume the GRH for the Dedelkind zeta function $\zeta_K(s)$. Then for any ideal $\mathfrak{c}$ coprime with $\mathfrak{m}$, $$\theta(x;\mathfrak{c},\mathfrak{m})=\frac{1}{h_{\mathfrak{m}}}x+O_{K,\mathfrak{m}}(\sqrt{x}(\log x)^2),$$as $x\rightarrow+\infty$.
\end{thm}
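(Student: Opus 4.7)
The plan is to reduce the statement to an effective prime ideal theorem for Hecke $L$-functions attached to characters of the ray class group $C_{\mathfrak{m}}$, and then invoke GRH to bound the contribution from non-trivial zeros. Throughout, it will be convenient to replace $\theta(x;\mathfrak{c},\mathfrak{m})$ by the closely related weighted sum
\[
\psi(x;\mathfrak{c},\mathfrak{m}) \;=\; \sum_{\substack{\mathfrak{a}\sim\mathfrak{c}\\ N\mathfrak{a}\le x}}\Lambda_K(\mathfrak{a}),
\]
where $\Lambda_K$ is the ideal-theoretic von Mangoldt function; the difference $\psi-\theta$ comes only from prime powers $\wp^k$ with $k\ge 2$ and is $O(\sqrt{x}\log x)$, which is absorbed in the claimed error.

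First I would use orthogonality of characters of the finite abelian group $C_{\mathfrak{m}}$ to write
\[
\psi(x;\mathfrak{c},\mathfrak{m}) \;=\; \frac{1}{h_{\mathfrak{m}}} \sum_{\chi \in \widehat{C_{\mathfrak{m}}}} \bar{\chi}(\mathfrak{c}) \, \psi(x,\chi), \qquad \psi(x,\chi)=\sum_{N\mathfrak{a}\le x}\Lambda_K(\mathfrak{a})\chi(\mathfrak{a}).
\]
The trivial character $\chi_0$ contributes $\psi(x,\chi_0)/h_{\mathfrak{m}}$, which, by the effective prime ideal theorem for $\zeta_K$ (itself a consequence of GRH for $\zeta_K$ after removing the finitely many Euler factors at primes dividing $\mathfrak{m}$), equals $x/h_{\mathfrak{m}} + O_{K,\mathfrak{m}}(\sqrt{x}(\log x)^2)$. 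The task is therefore to show that each non-trivial character contributes only $O_{K,\mathfrak{m}}(\sqrt{x}(\log x)^2)$.

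For a non-trivial $\chi$, I would apply the truncated explicit formula for the Hecke $L$-function $L(s,\chi)$: for $T\ge 2$,
\[
\psi(x,\chi) \;=\; -\sum_{\substack{\rho\\ |\gamma|\le T}}\frac{x^\rho}{\rho} \;+\; O\!\left(\frac{x(\log x_\chi T)^2}{T} + \log x_\chi\right),
\]
where the sum is over non-trivial zeros $\rho=\beta+i\gamma$ of $L(s,\chi)$ and $x_\chi$ is an analytic-conductor parameter. Under GRH we have $\beta=1/2$, so $|x^\rho/\rho|\le \sqrt{x}/|\rho|$. The standard zero-counting estimate
\[
N(T,\chi) - N(T-1,\chi) \;\ll_{K,\mathfrak{m}}\; \log(T+2)
\]
for Hecke $L$-functions then gives $\sum_{|\gamma|\le T}|\rho|^{-1} \ll_{K,\mathfrak{m}} (\log T)^2$. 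Choosing $T=\sqrt{x}$ balances both error terms and yields $\psi(x,\chi)=O_{K,\mathfrak{m}}(\sqrt{x}(\log x)^2)$. Summing over the $h_{\mathfrak{m}}$ characters and dividing by $h_{\mathfrak{m}}$ absorbs the character sum into the implied constant, which finishes the estimate for $\psi(x;\mathfrak{c},\mathfrak{m})$, and hence for $\theta(x;\mathfrak{c},\mathfrak{m})$.

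The main obstacle is the explicit formula itself for Hecke $L$-functions of characters of $C_{\mathfrak{m}}$, together with the companion zero-density estimate; both require careful bookkeeping of the archimedean gamma factors and of the conductor (which depends on $K$ and $\mathfrak{m}$), since one must ensure all errors depend on these data only through the implicit constant and not through the $x$-aspect. Once these analytic inputs are in place, as in the work of Lagarias and Odlyzko \cite{LO}, the rest of the argument is essentially the classical Riemann--von Mangoldt scheme adapted to the ray-class setting.
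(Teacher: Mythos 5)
The paper does not give a proof of Theorem \ref{thm3}: the text preceding it states explicitly that Theorems \ref{thm3} and \ref{thm4} are results of Lagarias and Odlyzko \cite{LO}, quoted for later use in Proposition \ref{prop2}. Your sketch is essentially the Lagarias--Odlyzko argument specialized to the abelian ray-class setting---orthogonality of characters of $C_{\mathfrak{m}}$, a truncated explicit formula for each Hecke $L(s,\chi)$, GRH to place the non-trivial zeros on the critical line, and a Riemann--von Mangoldt zero count to bound $\sum_{|\gamma|\le T}|\rho|^{-1}\ll(\log T)^2$ with the choice $T\asymp\sqrt{x}$---so your approach coincides with the one the paper implicitly invokes through the citation.

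One point deserves attention. Your argument (correctly) requires GRH for \emph{every} Hecke $L$-function $L(s,\chi)$ with $\chi\in\widehat{C_{\mathfrak{m}}}$, which is equivalent to GRH for the Dedekind zeta function of the ray class field of $K$ modulo $\mathfrak{m}$. The hypothesis as printed in the paper, GRH for $\zeta_K(s)$ alone, controls only the trivial-character contribution and is strictly weaker than what the proof actually consumes. This is an imprecision in the paper's statement rather than a gap in your proposal, but the hypothesis should be read as ``GRH for the ray class field'' (equivalently, for all Hecke $L$-functions attached to characters of $C_{\mathfrak{m}}$), which is the hypothesis in the original Lagarias--Odlyzko theorem and the one your sketch correctly uses.
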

\begin{thm}\label{thm4}
	If $d>1$, then $\zeta_K(s)$ has at most one zero $\beta_0=\sigma+it$ in the region$$\sigma\ge 1-(4\log D_K)^{-1},\ |t|\le(4\log D_K)^{-1}.$$If $\beta_0$ exists, it is real and simple. There exist effectively computable absolute constants such that$$\theta(x;\mathfrak{c},\mathfrak{m})=\frac{1}{h_{\mathfrak{m}}}x+O_{K,\mathfrak{m}}(xe^{-C\sqrt{\log x}}+x^{\beta_0})$$as $x\rightarrow+\infty$, with the understanding that the $\beta_0$ term is present only if $\beta_0$ exists. 
\end{thm}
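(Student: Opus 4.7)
The plan is to reduce the counting of primes in a fixed ray class to a sum over Hecke characters modulo $\mathfrak{m}$, and then to control each piece via the associated Hecke $L$-function, exactly as in the classical proof of the prime number theorem for arithmetic progressions. By orthogonality of the characters of $C_{\mathfrak{m}}$,
$$\theta(x;\mathfrak{c},\mathfrak{m})=\frac{1}{h_{\mathfrak{m}}}\sum_{\chi\bmod\mathfrak{m}}\overline{\chi}(\mathfrak{c})\,\theta(x,\chi),\qquad \theta(x,\chi)=\sum_{N_{K/\Q}\wp\le x}\chi(\wp)\log N_{K/\Q}\wp.$$
The trivial character contributes the main term $x/h_{\mathfrak{m}}$ through the prime ideal theorem for $\zeta_K$, so the task reduces to showing that $\theta(x,\chi)$ is appropriately small for each nontrivial Hecke character. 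For this I would apply a truncated Perron-type formula to $-L'/L(s,\chi)$ and shift the contour leftward, obtaining
$$\theta(x,\chi)=-\sum_{|\gamma|\le T}\frac{x^{\rho}}{\rho}+O\!\left(\frac{x(\log x D_K N_{K/\Q}\mathfrak{m})^{2}}{T}\right),$$
with the sum running over the nontrivial zeros $\rho=\beta+i\gamma$ of $L(s,\chi)$.

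The heart of the argument is the analysis of the zeros. I would establish a uniform zero-free region of the shape $\beta\ge 1-c/\log(D_K N_{K/\Q}\mathfrak{m}(|t|+2))$ for every Hecke $L$-function of conductor dividing $\mathfrak{m}$ via the standard positivity trick applied to a product such as $\zeta_K(s)^{3}L(s,\chi)^{4}L(s,\chi^{2})$, with at most one exceptional real zero $\beta_0$ coming from a real character. A Deuring--Heilbronn-type repulsion argument then shows that if such a $\beta_0$ existed, every other zero of every $L(s,\chi)$ would be pushed far away; in particular $\zeta_K(s)$ itself admits at most one zero in the prescribed box, which must be real and simple.

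With the zero-free region in hand, I bound the number of zeros with $|\gamma|\le T$ by a Riemann--von Mangoldt type estimate of order $T\log(D_K T)$ and then choose $T$ so that $\log T\asymp\sqrt{\log x}$ in order to balance the truncation error against the contribution of the zeros lying inside the zero-free region; this produces the $xe^{-C\sqrt{\log x}}$ term, while the possible exceptional zero contributes the isolated residue $x^{\beta_0}/\beta_0$ absorbed into $O(x^{\beta_0})$. For Theorem~\ref{thm3}, under GRH every nontrivial zero satisfies $\beta=\tfrac{1}{2}$, and the same pipeline with $T=\sqrt{x}$ yields the sharper $\sqrt{x}(\log x)^{2}$ error, the squared logarithm reflecting the vertical density of zeros on the critical line.

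The principal obstacle is uniformity: every implicit constant must be traceable explicitly to $D_K$ and $N_{K/\Q}\mathfrak{m}$, and in particular the Deuring--Heilbronn propagation step, which is what both rules out two near-boundary zeros and forces the exceptional zero to be unique, real, and simple, is by far the most delicate ingredient. Once it is in place, the remaining work — quantitative zero counts, contour shifts, and the final choice of $T$ — is standard analytic manipulation of the explicit formula.
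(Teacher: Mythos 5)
The paper does not prove this theorem; it is quoted as a known result of Lagarias and Odlyzko \cite{LO}, as the text immediately preceding the statement makes explicit (``We state here both a conditional and unconditional results proved by Lagarias and Odlyzko in 1977''). There is therefore no in-paper proof to compare against. Your sketch does follow the standard Lagarias--Odlyzko pipeline --- orthogonality over ray class characters, a truncated Perron/explicit formula for $-L'/L(s,\chi)$, a classical zero-free region via the $3$--$4$--$1$ positivity argument, Deuring--Heilbronn repulsion to isolate and control the exceptional zero, a Riemann--von Mangoldt zero count, and the balancing choice $\log T\asymp\sqrt{\log x}$ --- and this is essentially how the theorem is established in the cited reference, so the proposal is sound in spirit.

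Two cautions if you were to carry the sketch out fully. First, the theorem asserts a very specific box $\sigma\ge 1-(4\log D_K)^{-1}$, $|t|\le(4\log D_K)^{-1}$ for $\zeta_K$ itself, whereas your zero-free region has the generic $1-c/\log(D_K N_{K/\Q}\mathfrak{m}(|t|+2))$ shape; producing the explicit numerical constant $1/4$ requires the fully quantitative form of the positivity/Stark argument rather than merely the qualitative shape of the region, and the ``at most one zero, real and simple'' claim for $\zeta_K$ in that box has to be derived directly for $\zeta_K$ (not just inherited from a statement about arbitrary Hecke $L$-functions). Second, the principal ray class character has $L(s,\chi_0)=\zeta_K(s)\prod_{\wp\mid\mathfrak{m}}(1-N\wp^{-s})$, so the residue giving the main term and the zeros you are tracking differ from those of $\zeta_K$ by finitely many Euler factors; keeping the $O_{K,\mathfrak{m}}$ dependence honest requires accounting for these.
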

If $\mathfrak{a}\in I^{S(\mathfrak{m})}$. Let $S(M)$ be the set of ideals of $K$ defined by$$S(M)=S_{\mathfrak{a},\mathfrak{m}}(M)=\{I\subseteq\mathcal{O}_K:N_{K/\Q}I\le M,\ I\sim\mathfrak{a}\mbox{ mod }C_{\mathfrak{m}}\}.$$
\begin{prop}\label{prop2}
		\begin{enumerate}
		\item[$\paruno \ai\pardue$] Assume the GRH for $\zeta_K(s)$. Then$$\log|N_{K/\Q}(\lcm(S(M)))|=M\cdot\frac{1}{h_{\mathfrak{m}}}\sum_{\mathfrak{c}\in C_{\mathfrak{m}}}\frac{1}{N_{K/\Q}\mathfrak{c}}+O_{K,\mathfrak{m}}(\sqrt{M}(\log M)^2),$$as $M\rightarrow+\infty$, where $N_{K/\Q}\mathfrak{c}$ is the smallest norm of an integral ideal in the ray class group of $\mathfrak{c}$.
	\item[$\paruno \bi\pardue$] Let $\beta_0$ be the possible Siegel zero of $\zeta_K(s).$ Then$$\log|N_{K/\Q}(\lcm(S(M)))|=M\cdot\frac{1}{h_{\mathfrak{m}}}\sum_{\mathfrak{c}\in C_{\mathfrak{m}}}\frac{1}{N_{K/\Q}\mathfrak{c}}+O_{K,\mathfrak{m}}(\sqrt{M}e^{-C\sqrt{\log M}}+M^{\beta_0}),$$as $M\rightarrow+\infty$, where the last error term is present if and only if $\beta_0$ exists.
	\end{enumerate}
\end{prop}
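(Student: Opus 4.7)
\medskip\noindent\textbf{Proof plan.}
The idea is to decompose $\lcm(S(M))$ prime by prime and reduce each prime's contribution to a single evaluation of the ray-class Chebyshev function $\theta(\,\cdot\,;\mathfrak{c},\mathfrak{m})$, to which Theorems \ref{thm3} and \ref{thm4} apply directly. Writing $\lcm(S(M)) = \prod_{\wp} \wp^{k_\wp(M)}$ with
$$k_\wp(M) := \max\{\ord_\wp(I) : I \in S(M)\},$$
and observing that every $I \in S(M)$ lies in $I^{S(\mathfrak{m})}$ and is therefore coprime to $\mathfrak{m}$, one gets
$$\log|N_{K/\Q}(\lcm(S(M)))| = \sum_\wp k_\wp(M)\log N_{K/\Q}\wp = \sum_{k\ge 1}\sum_{\wp\nmid\mathfrak{m}}\log N_{K/\Q}\wp\cdot\mathbbm{1}[k_\wp(M)\ge k].$$

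The structural step is the criterion: $k_\wp(M)\ge k$ if and only if there is an integral ideal $J\sim \mathfrak{a}\wp^{-k}$ in $C_\mathfrak{m}$ with $N_{K/\Q}J\le M/(N_{K/\Q}\wp)^k$, equivalently $(N_{K/\Q}\wp)^k\le M/N_{K/\Q}(\mathfrak{a}\wp^{-k})$, using the convention that $N_{K/\Q}\mathfrak{c}$ denotes the minimal integral-ideal norm in the class $\mathfrak{c}$. (Forward: factor $I=\wp^{k}J$ and check classes and norms; backward: take $I=\wp^{k}J$.) Since the class of $\wp$ in $C_\mathfrak{m}$ is a function of its ray class $\mathfrak{d}$, grouping primes by $\mathfrak{d}$ yields, for each fixed $k$,
$$\sum_{\wp\nmid\mathfrak{m}}\log N_{K/\Q}\wp\cdot\mathbbm{1}[k_\wp(M)\ge k] \;=\; \sum_{\mathfrak{d}\in C_\mathfrak{m}}\theta\!\left((M/N_{K/\Q}(\mathfrak{a}\mathfrak{d}^{-k}))^{1/k};\,\mathfrak{d},\,\mathfrak{m}\right).$$
The $k=1$ terms supply the main contribution: applying Theorem \ref{thm3} (for case (a)) or Theorem \ref{thm4} (for case (b)) to each of the $h_\mathfrak{m}$ summands, and reindexing via $\mathfrak{c}:=\mathfrak{a}\mathfrak{d}^{-1}$ (a bijection of $C_\mathfrak{m}$), gives the main term $\tfrac{M}{h_\mathfrak{m}}\sum_{\mathfrak{c}\in C_\mathfrak{m}}1/N_{K/\Q}\mathfrak{c}$ together with an error aggregating $h_\mathfrak{m}=O_{K,\mathfrak{m}}(1)$ copies of the respective Lagarias--Odlyzko error.

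The terms with $k\ge2$ form the tail. Using the trivial bound $\theta(y;\cdot,\cdot)\le y$ together with $(M/N_{K/\Q}(\mathfrak{a}\mathfrak{d}^{-k}))^{1/k}\le M^{1/k}$, they contribute
$$\sum_{k\ge 2}\sum_{\mathfrak{d}\in C_\mathfrak{m}} O(M^{1/k}) = O_\mathfrak{m}(\sqrt{M}),$$
which is absorbed by the error term in each of (a) and (b). The main (and essentially only) obstacle is the clean bookkeeping around the ray-class group: checking that the criterion for $k_\wp(M)\ge k$ factors through $C_\mathfrak{m}$, confirming that the minimal-norm representatives $N_{K/\Q}\mathfrak{c}$ appear exactly as stated after the reindexing, and verifying that the higher-$k$ tail is indeed swallowed by the main error. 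The only nontrivial analytic inputs are the effective prime-ideal estimates of Theorems \ref{thm3} and \ref{thm4}.
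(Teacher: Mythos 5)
Your proof is correct and follows essentially the same route as the paper: decompose the exponent of each prime in $\lcm(S(M))$ so that the $k=1$ layer reduces to a sum of ray-class Chebyshev functions $\theta(M/N_{K/\Q}\mathfrak{c};\cdot,\mathfrak{m})$ over $C_\mathfrak{m}$ (the paper phrases this as ``focus on the squarefree part $T(M)$''), and show the higher-exponent contribution is $O(\sqrt{M}\log M)$, then apply the Lagarias--Odlyzko estimates and reindex via $\mathfrak{c}\mapsto\mathfrak{a}\mathfrak{c}^{-1}$. The only cosmetic differences are that you organize the remainder as a layer-cake sum over $k\ge2$ and bound it via $\sum_{k\ge 2}M^{1/k}\ll\sqrt{M}$, whereas the paper observes directly that any prime with $\wp^2\mid I$ for some $I\in S(M)$ has $q_\wp\le\sqrt M$; also note that the ``trivial bound'' should read $\theta(y;\cdot,\cdot)\ll_K y$ rather than $\le y$, since up to $d=[K:\Q]$ primes of $K$ may lie over one rational prime, but this does not affect the conclusion.
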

\begin{proof}
	Let $P(M)$ be the set of prime factors of $\lcm(S(M))$, that is$$P(M)=\{\wp\in\mathcal{O}_K:\wp\mbox{ divides at least one }I\subseteq S(M)\}.$$We are going to characterize the primes in the set $P(M)$. \\
	
	We have that$$N_{K/\Q}(\lcm(S(M)))=\prod_{\wp\in P(M)}q_\wp^{\beta_{\wp}(M)},$$where $\beta_{\wp}(M)$ is the highest power of $\wp$ dividing any element of $S(M)$. On the other hand, if we let $T(M)=\prod_{\wp\in P(M)}q_\wp$, it turns out that$$\frac{N_{K/\Q}(\lcm(S(M)))}{N_{K/\Q}(T(M))}=\prod_{\wp}q_\wp^{\beta_{\wp}(M)-1},$$where the product is over the primes $\wp$ whose square divides some element of $S(M)$. In particular, each of those $\wp$ has norm $q_\wp\le M$ and they are at most $\sqrt{M}$. Therefore$$\log|N_{K/\Q}(\lcm(S(M)))|-\log|N_{K/\Q}(T(M))|\ll\sqrt{M}\log M. $$Then it suffices to focus on $\log|N_{K/\Q}(T(M))|$.\\
	Note that if $I\in S(M)$, then $I\in S(\mathfrak{m})$, so it is coprime with $\mathfrak{m}$. Let now $\wp\supseteq I$ (i.e. $\wp\in P(M))$; then $\wp\equiv\mathfrak{c}$ in $C_{\mathfrak{m}}$ for some representative $\mathfrak{c}\in I^{S(\mathfrak{m})}$. In particular $\mathfrak{c}^{-1}\in I^{S(\mathfrak{m})}$; let then $\mathfrak{d}=\mathfrak{c}^{-1}\mathfrak{a}$, so that $\mathfrak{c}\mathfrak{d}\equiv\mathfrak{a}$ in $C_{\mathfrak{m}}$. We can take $\mathfrak{d}$ of smallest norm among the coset representatives. Since every class in $C_{\mathfrak{m}}$ is represented by an integral ideal, we can also assume $\mathfrak{d}$ integral ideal of smallest norm.\\
	Now, $\mathfrak{d}\wp\in S(M)$ is equivalent to $N_{K/\Q}(\mathfrak{d}\wp)=N_{K/\Q}\mathfrak{d}\cdot q_{\wp}\le M$. Hence $\wp\equiv\mathfrak{c}$ in $C_{\mathfrak{m}}$ is in $P(M)$ if and only if$$q_{\wp}\le\frac{M}{N_{K/\Q}\mathfrak{d}}.$$Denote by $U(M)$ the set of such primes.\\
	Finally,\begin{align*}		
		\log|N_{K/\Q}(T(M))|&=\sum_{\wp\in P(M)}\log q_\wp\\
		&=\sum_{\mathfrak{c}\in C_{\mathfrak{m}}}\sum_{\wp\in U(M)}\log q_{\wp}\\
		&=\sum_{\mathfrak{c}\in C_{\mathfrak{m}}}\theta\left( \tfrac{M}{N_{K/\Q}\mathfrak{d}};\mathfrak{c};\mathfrak{m}\right) \\
		&=\frac{M}{h_{\mathfrak{m}}}\sum_{\mathfrak{c}\in C_{\mathfrak{m}}}\frac{1}{N_{K/\Q}\mathfrak{c}}+o(M),
	\end{align*}where $N_{K/\Q}\mathfrak{c}$ is by definition the smallest norm of an integral ideal in the ray class group of $\mathfrak{c}$. The last equality holds because $\mathfrak{d}$ runs over $C_{\mathfrak{m}}$ as $\mathfrak{c}$ does.
	The result follows by applying either Theorem \ref{thm3} or Theorem \ref{thm4} to the last step.
	
\end{proof}

Let now the modulus $\mathfrak{m}$ be a "principal modulus", that is $\mathfrak{m}=(\nu)$, where $\nu\in\mathcal{O}_K$, and let $\alpha\in\mathcal{O}_K$ be coprime with $\nu$.
Consider the set of principal ideals$$S(M)=\{(\beta)\in\mathcal{O}_K:N_{K/\Q}\beta\le M,\ (\beta)\sim(\alpha)\mbox{ mod }C_{(\nu)}\}.$$
It is well-known that $(\beta)\sim(\alpha)$ mod $C_{(\nu)}$ is equivalent to say that there exist $a,b\in\mathcal{O}_K$, $a,b\neq0$, $a\equiv b\equiv1$ mod $\nu$ so that $(\beta)b=(\alpha)a$. It follows that$$(\beta)\sim(\alpha)\mbox{ mod }C_{(\nu)}\mbox{ if and only if }\beta\equiv\eta\alpha\mbox{ mod }\nu\mbox{ for some }\eta\in\mathcal{O}_K^{\times}.$$In particular
$$S(M)\sim\{f_{\eta}(\lambda):\eta\in\mathcal{O}_K^\times,\ \lambda\in\mathcal{O}_K,\ N_{K/\Q}\lambda\ll M\},$$where $f_{\eta}(X)=\eta\alpha+\nu X\in\mathcal{O}_K[X]$. It is immediate from Proposition \ref{prop2} the following corollary, which can be interpreted as a version of Cilleruelo's conjecture for linear polynomials over $K$.
\begin{coroll}
	\begin{multline*}
		\log|N_{K/\Q}(\lcm(f_{\eta}(\lambda):\eta\in\mathcal{O}_K^\times,\ \lambda\in\mathcal{O}_K,\ N_{K/\Q}\lambda\ll M))|\\=M\cdot\frac{1}{h_{(\nu)}}\sum_{\mathfrak{c}\in C_{\mathfrak{m}}}\frac{1}{N_{K/\Q}\mathfrak{c}}+o(M),
	\end{multline*}as $M\rightarrow+\infty$.
\end{coroll}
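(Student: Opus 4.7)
The plan is to reduce the corollary directly to Proposition \ref{prop2} via the identification
$$S(M)=S_{(\alpha),(\nu)}(M)\sim\{f_{\eta}(\lambda):\eta\in\mathcal{O}_K^\times,\ \lambda\in\mathcal{O}_K,\ N_{K/\Q}\lambda\ll M\}$$
recorded just above the statement. First I would check that this identification is an equality of sets of (principal) integral ideals, up to a harmless rescaling of $M$. The forward inclusion uses the equivalence $(\beta)\sim(\alpha)\bmod C_{(\nu)}\iff\beta\equiv\eta\alpha\bmod\nu$ for some $\eta\in\mathcal{O}_K^\times$; writing $\beta=\eta\alpha+\nu\lambda$ gives $\beta=f_\eta(\lambda)$, and $N_{K/\Q}\beta\le M$ forces $N_{K/\Q}\lambda\ll_{\alpha,\nu,K}M$ by the triangle inequality for the archimedean absolute values attached to the $d$ embeddings of $K$ into $\Co$. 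The reverse inclusion is immediate: any $f_\eta(\lambda)$ with $N_{K/\Q}\lambda$ bounded by a constant times $M$ lies in some $S(cM)$, and rescaling $M$ by a bounded factor changes the leading term by a bounded multiplicative constant.

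Since the two sets coincide (up to this rescaling) as collections of ideals in $\mathcal{O}_K$, their least common multiples in the Dedekind domain $\mathcal{O}_K$ agree; duplications coming from different $(\eta,\lambda)$ giving the same ideal are irrelevant for the $\lcm$. Applying Proposition \ref{prop2} to $S(M)$ with modulus $\mathfrak{m}=(\nu)$ and class representative $\mathfrak{a}=(\alpha)$ yields
$$\log|N_{K/\Q}(\lcm(S(M)))|=M\cdot\frac{1}{h_{(\nu)}}\sum_{\mathfrak{c}\in C_{\mathfrak{m}}}\frac{1}{N_{K/\Q}\mathfrak{c}}+E(M),$$
where $E(M)$ is either $O_{K,\mathfrak{m}}(\sqrt{M}(\log M)^2)$ under GRH, or $O_{K,\mathfrak{m}}(\sqrt{M}e^{-C\sqrt{\log M}}+M^{\beta_0})$ unconditionally; in both cases $E(M)=o(M)$ (note $\beta_0<1$ when the Siegel zero exists), which is precisely the error stated in the corollary.

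The only genuinely subtle point is the meaning of the symbol $\ll M$ in the statement: the implicit constant depends on $\alpha,\nu$ and $K$, and a priori it affects the coefficient of the main term. This is absorbed by the fact that rescaling $M\mapsto cM$ changes both the main term and $o(M)$ by the same constant factor, so the stated asymptotic remains valid provided one reads the formula as comparing the two sides as $M\to+\infty$ along the same scaling. A cleaner formulation would fix the implicit constant in $\ll$ to $1$ by choosing $\lambda$ in the bounded region $\{N_{K/\Q}\lambda\le M/\max(N_{K/\Q}\eta\alpha)\}$, but this is cosmetic. Hence no step is genuinely hard: the content of the corollary is Proposition \ref{prop2} translated through the dictionary between principal ideals in the ray class $(\alpha)\bmod(\nu)$ and linear polynomial values $f_\eta(\lambda)$.
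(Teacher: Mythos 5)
Your overall strategy is exactly the paper's: the paper declares the corollary ``immediate from Proposition~\ref{prop2}'' via the dictionary $S(M)\leftrightarrow\{f_\eta(\lambda)\}$ recorded just above the statement, and you make that invocation explicit, then observe that both error terms in Proposition~\ref{prop2} are $o(M)$. So there is no difference in route.

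There is, however, a gap in the one step where you go beyond the paper's (unproved) assertion that the two sets correspond. You claim that $N_{K/\Q}\beta\le M$ together with $\beta=\eta\alpha+\nu\lambda$ forces $N_{K/\Q}\lambda\ll M$ ``by the triangle inequality for the archimedean absolute values.'' For $d>1$ this does not follow: a bound on the \emph{norm} $N_{K/\Q}\beta=\prod_i|\sigma_i(\beta)|$ gives no bound on the individual $|\sigma_i(\beta)|$ (already a unit $\beta$ has norm $\pm1$ while $|\sigma_1(\beta)|$ is unbounded), and similarly $|\sigma_i(\eta\alpha)|$ is unbounded as $\eta$ ranges over $\mathcal{O}_K^\times$. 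So the triangle inequality controls neither side of $\sigma_i(\nu\lambda)=\sigma_i(\beta)-\sigma_i(\eta\alpha)$, and you cannot conclude $N_{K/\Q}\lambda\ll M$. Note the paper itself offers no justification for the correspondence $S(M)\sim\{f_\eta(\lambda):N_{K/\Q}\lambda\ll M\}$ (it merely writes ``$\sim$''), so your argument is an attempted filling-in that fails as stated. A correct fill-in would, for each ideal $(\beta)\in S(M)$, pick a generator in a fundamental domain for the unit action (so that $\max_i|\sigma_i(\beta)|\ll M^{1/d}$), then exploit the residual freedom $\eta\mapsto u\eta$ with $u\equiv1\bmod\nu$ together with the fact that only the prime factorization enters the lcm, to argue that the logarithms of the two lcm's differ by $O(\log M)=o(M)$; the raw sets of ideals need not coincide. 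Your closing paragraph on the rescaling $M\mapsto cM$ is also a bit loose, since rescaling changes the coefficient of the main term, but this is an imprecision inherited from the paper's own statement.
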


\begin{flushleft}
	\textbf{Example:} In the ring of 
	Eisenstein integers $\Z[\omega]=\mathcal{O}_{\Q(\omega)}$, $w=e^{2\pi i/3}$, all ideals are principal. In the above notations we have $(\alpha)\sim(\beta)$ mod $\nu$ iff $\alpha=\eta\beta$, where $\eta=\pm\omega^j\in\Z[\omega]^\times$, $j=0,1,2$.
	In this case we get
\end{flushleft}\begin{multline*}
\log|N_{K/\Q}(\lcm(\eta\alpha+\nu\lambda:\eta\in\{\pm1,\pm\omega,\pm\omega^2\},\  \lambda\in\mathcal{O}_K,\ N_{K/\Q}\lambda\le M))|\\=M\cdot\frac{1}{h_{(\nu)}}\sum_{\mathfrak{c}\in C_{\mathfrak{m}}}\frac{1}{N_{K/\Q}\mathfrak{c}}+o(M),
\end{multline*}as $M\rightarrow+\infty$.

\subsection{Average version for irreducible $S_n$-polynomials of higher degree}
\subsubsection*{Proof of Theorem \ref{thm2}}
As already mentioned, almost all $f\in\mathscr{P}_{n,N}^{0}(K)$ are irreducible (the error term in this case is $O(N^{-d}))$.
Assume now $n\ge3$. We start by computing an asymptotic formula for the mean value of the quantity$$\Psi_f(N,M)=\log|N_{K/\Q}(\lcm(f(\lambda):\lambda\in\mathcal{O}_K,\ N_{K/\Q}\lambda\le M))|.$$

\begin{prop}\label{prop3}
	Let $N,M>0$ such that$$M(\log M)^\ell\ll N=o\left( M\frac{\log M}{\log\log M}\right) $$for some $0<\ell<1$. Then\begin{multline*}
		\mathbb{E}_N(\Psi_f(N,M))=(n-1)M\log M
		+O\left( M\frac{\log M}{\log\log M}+N\log\log M\right),
	\end{multline*}as $N,M\rightarrow+\infty$.
\end{prop}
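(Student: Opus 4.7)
Proof plan:

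The approach is in the spirit of Rudnick and Zehavi's treatment of the $\Z$-case, adapted to $\mathcal{O}_K$ by replacing sums over rational primes by sums over prime ideals $\wp$. I would decompose $\Psi_f$ into prime contributions and estimate each block using the probability estimates from Lemma \ref{l11} and Proposition \ref{p9}.

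Writing $\Psi_f(N,M)=\sum_\wp V_\wp(f,M)\log q_\wp$ with $V_\wp(f,M)=\max_{\lambda\colon N_{K/\Q}\lambda\le M}\ord_\wp(f(\lambda))$, I split $V_\wp=\mathbbm{1}_{V_\wp\ge 1}+(V_\wp-1)_+$ and correspondingly $\Psi_f=A_f+B_f$. The main term $(n-1)M\log M$ should come from $A_f$ restricted to primes with $q_\wp>M$. In that regime each residue class mod $\wp$ contains at most a bounded number of $\lambda$ with $N_{K/\Q}\lambda\le M$, the event $V_\wp\ge 1$ is sparse, and inclusion--exclusion gives
$$\mathbbm{1}_{\exists\lambda\colon\wp|f(\lambda)}=\sum_{\lambda}\mathbbm{1}_{\wp|f(\lambda)}-O(\text{overlap}).$$
Summing Lemma \ref{l11} (with $k=1$) over the $q_\wp^{n-1}$ monic polynomials of degree $n$ vanishing at $\lambda$ mod $\wp$ yields $\mathbb{P}_N[\wp|f(\lambda)]=1/q_\wp+O(q_\wp^{n-1}N^{-d\xi})$ for each fixed $\lambda$. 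Taking expectations, summing over $\lambda$ with $N_{K/\Q}\lambda\le M$ and then over $M<q_\wp\le Q$, where $Q\asymp(M^{1/d}+N)^{nd}$ is the trivial upper bound on $|N_{K/\Q}f(\lambda)|$, and invoking the prime number theorem in $K$ (so that $\sum_{q_\wp\le X}\log q_\wp/q_\wp=\log X+O(1)$) gives
$$\mathbb{E}_N\Bigl[\sum_{M<q_\wp\le Q}\log q_\wp\,\mathbbm{1}_{V_\wp\ge 1}\Bigr]=M(\log Q-\log M)+(\text{errors})=(n-1)M\log M+O(M\log\log M),$$
after simplifying $\log Q-\log M$ in the regime $N=o(M\log M/\log\log M)$.

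The remaining contributions are absorbed into the error term. For $q_\wp\le M$, Proposition \ref{p9}(a) combined with Chebotarev yields $\mathbb{P}_N[V_\wp\ge 1]\approx\mathbb{P}[r_1(\wp,f)\ge 1]=1-D_n/n!+O(\ldots)$, a bounded constant, so $\sum_{q_\wp\le M}\log q_\wp\cdot\mathbb{P}_N[V_\wp\ge 1]=O(M)$. For the higher-power term $B_f$, extending Lemma \ref{l11} to $\wp^k$ with $k\ge 2$ and summing gives $\mathbb{E}_N[B_f]\ll M\sum_\wp\log q_\wp/q_\wp^2=O(M)$. The $O(M\log M/\log\log M)$ and $O(N\log\log M)$ pieces of the error absorb: (i) the overlap corrections in the union bound; (ii) the mismatch between the Lemma \ref{l11} validity range $q_\wp<N^{d\xi/n}$ and the true upper cutoff $Q\asymp N^{nd}$, for which one falls back to the Markov-type bound $\mathbb{E}_N[\mathbbm{1}_{V_\wp\ge 1}]\le\mathbb{E}_N[\#\{\lambda\colon\wp|f(\lambda)\}]\ll nM/q_\wp$; and (iii) the finer expansion $\log Q=nd\log(M^{1/d}+N)$ required to pass from $\log Q-\log M$ to $(n-1)\log M$ in the given range of $N$.

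The main obstacle is controlling $\mathbb{E}_N[\mathbbm{1}_{V_\wp\ge 1}]$ uniformly for $q_\wp$ all the way up to $Q\asymp N^{nd}$, since the bulk of the leading term draws its weight from primes well beyond the Lemma \ref{l11} validity range $q_\wp<N^{d\xi/n}$. In the intermediate regime one must settle for the Markov upper bound, verify that the loss fits within $O(M\log M/\log\log M)$, and simultaneously ensure that the corresponding lower bound remains sharp enough to preserve the main term. Balancing these two error sources against the constraint $M(\log M)^\ell\ll N\ll M\log M/\log\log M$ is the decisive technical step.
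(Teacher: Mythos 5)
Your approach differs genuinely from the paper's, but it contains a gap that you correctly identify and do not resolve, and this gap is fatal to the argument as described.

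The paper's proof is a Cilleruelo-type comparison. It writes
\[
\Psi_f(N,M)=\log P_f(M)-\sum_{\wp}\bigl(\alpha_\wp(M)-\beta_\wp(M)\bigr)\log q_\wp,
\qquad P_f(M)=\prod_{N_{K/\Q}\lambda\le M}|N_{K/\Q}f(\lambda)|,
\]
where $\alpha_\wp$ is the total multiplicity of $\wp$ in $P_f$ and $\beta_\wp$ the multiplicity in the lcm, and then splits the correction sum into ramified/unramified and small/large primes. The decisive feature of this decomposition is that $\log P_f(M)=\sum_{\lambda}\log|N_{K/\Q}f(\lambda)|=nM\log M+O(\cdots)$ is computed \emph{deterministically}, using only the height bound on $f$ and elementary estimates on $\log|N_{K/\Q}f(\lambda)|$; no probability is used there. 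Probability (Lemma~\ref{l11}, Proposition~\ref{p9}) enters only in estimating $\sum_{q_\wp\le M,\,\text{unram}}\alpha_\wp\log q_\wp\approx M\log M$, and there it is applied only to primes $q_\wp\le M'$ with $M'<N^{d\xi/(n+1)}$; for $M'<q_\wp\le M$ the paper uses the trivial pointwise bound on $\sigma_\wp(f)$. Thus the main term $(n-1)M\log M$ arises as $nM\log M - M\log M$, and at no point does the argument need a per-prime probability estimate for $q_\wp$ of size anywhere near $M$ or larger.

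Your decomposition $\Psi_f=A_f+B_f$ with $A_f=\sum_\wp\log q_\wp\,\mathbbm{1}_{V_\wp\ge1}$ instead tries to generate the main term directly from the primes $q_\wp>M$ by computing $\mathbb{E}_N[\mathbbm{1}_{V_\wp\ge1}]\approx M/q_\wp$ and summing. This forces you to control $\mathbb{P}_N[\wp\mid f(\lambda)]$ for $q_\wp$ all the way up to $Q\asymp|N_{K/\Q}f(\lambda)|_{\max}$, which is of order $M^n$ or larger. But Lemma~\ref{l11} is valid only for $q_\wp<N^{d\xi/n}$, and in the stated range $M(\log M)^\ell\ll N\ll M\log M/\log\log M$ one has $N^{d\xi/n}\ll M$ for every admissible $(\xi,d,n)$; so the \emph{entire} range $M<q_\wp\le Q$ that is supposed to produce the main term lies outside the validity of the probability estimates. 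The error term $O(q_\wp^{n-1}N^{-d\xi})$ in $\mathbb{P}_N[\wp\mid f(\lambda)]$, summed over $\lambda$ (about $M$ of them) and over $q_\wp\le Q$, accumulates to something of order $MN^{-d\xi}Q^n$, which dwarfs the main term; and beyond $q_\wp\ge N^{d\xi/n}$ there is no estimate at all. The fallback you propose --- the Markov bound $\mathbb{E}_N[\mathbbm{1}_{V_\wp\ge1}]\le\mathbb{E}_N[\#\{\lambda:\wp\mid f(\lambda)\}]$ --- is only an upper bound, so it cannot certify the main term from below, and even as an upper bound it inherits the same uncontrolled $O(M/N)$-type error per prime.

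There is a second, more elementary problem: the quantity $\log Q-\log M$ does \emph{not} simplify to $(n-1)\log M$ in the given range. With $N\gg M(\log M)^\ell$ the trivial bound gives $\log Q=d\log N+(n-1)\log M+O(1)$ (your $(M^{1/d}+N)^{nd}$ is even larger), so $M(\log Q-\log M)$ overshoots $(n-1)M\log M$ by $dM\log N\asymp dM\log M$ --- an error of the same order as the main term, not the claimed $O(M\log\log M)$. The trivial upper cutoff $Q$ is the wrong object: the effective cutoff is set by the typical size of $|N_{K/\Q}f(\lambda)|$, and this is precisely the information encoded (without any probability) in $\log P_f(M)$. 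This is exactly what the comparison with $P_f(M)$ buys you and what a direct first-moment sum over primes cannot recover.

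So the decomposition is genuinely different from the paper's, but the obstacle you flag as the decisive technical step is in fact not surmountable by the tools you cite, and the route through $\log P_f(M)$ is not an optional alternative but the mechanism that makes the main term computable at all.
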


\begin{proof}
	Following \cite{Cil}, we compare the behaviour of$$\lcm(f(\lambda):N_{K/\Q}\lambda\le M)=\prod_{\wp\in\mathcal{P}_f}\wp^{\beta_\wp(M)}$$and$$P_f(M):=\prod_{N_{K/\Q}\lambda\le M}|N_{K/\Q}f(\lambda)|=\prod_{\wp}|N_{K/\Q}\wp|^{\alpha_\wp(M)},$$where $ \mathcal{P}_f $ is the set of primes such that the equation $ f\equiv 0\mod \wp $ has some solutions, which is the set of $ \wp $ so that $ \Frob_{f,\wp}\in G_f $ has fixed points. We start by writing\begin{align*}
		\Psi_f(N,M)&=\log P_f(M)+\sum_{N_{K/\Q}\wp\le M}\beta_\wp(M)\log N_{K/\Q}\wp \\
		&-\underset{\wp\tiny\mbox{ unramified}}{\sum_{N_{K/\Q}\wp\le M}}\alpha_\wp(M)\log N_{K/\Q}\wp\\&-\underset{\wp\tiny\mbox{ ramified}}{\sum_{N_{K/\Q}\wp\le M}}\alpha_\wp(M)\log N_{K/\Q}\wp\\&-\sum_{N_{K/\Q}\wp>M}(\alpha_\wp(M)-\beta_\wp(M))\log N_{K/\Q}\wp
	\end{align*}and we're going to study all these five terms.
	
	$\bullet$ $\log P_f(M)=\sum_{N_{K/\Q}\lambda\le M}\log|N_{K/\Q}f(\lambda)|$; pick $A=A(M,N)$ such that $A=o(M) $ and $ A\gg\frac{N}{\log M} $. 
	
	Then for $A \ll N_{K/\Q}\lambda\le M $ and $ f(X)=X^{n}+\sum_{i=0}^{n-1}\alpha_{n-i-1}X^{n-i-1} $, one has\begin{align*}
		\log|N_{K/\Q}f(\lambda)|&=n\log |N_{K/\Q}\lambda|+\log\left|N_{K/\Q}\left( 1+\frac{\alpha_{n-1}}{\lambda}+\dots+\frac{\alpha_0}{\lambda^n}\right)\right| \\
		&=n\log |N_{K/\Q}\lambda|+\log\prod_{i=1}^{d}\left| 1+\sigma_i\left( \frac{\alpha_{n-1}}{\lambda}+\dots\right) \right| \\
		&=n\log |N_{K/\Q}\lambda|+\sum_{i=1}^{d}\log\left( \left| 1+\sigma_i\left( \frac{\alpha_{n-1}}{\lambda}+\dots\right) \right|\right)  \\
		&= n\log |N_{K/\Q}\lambda|+\sum_{i=1}^{d}O\left( \sigma_i\left( \frac{\alpha_{n-1}}{\lambda}\right) +\dots+\sigma_i\left( \frac{\alpha_0}{\lambda^n}\right) \right)\\
		&= n\log |N_{K/\Q}\lambda|+\sum_{i=1}^{d}O\left( \frac{N}{N_{K/\Q}\lambda}+\dots+\frac{N}{N_{K/\Q}\lambda^n}\right) \\
		&= n\log |N_{K/\Q}\lambda|+O_{n,K}\left( \frac{N}{A}\right) ,
	\end{align*}where $\sigma_1,\dots,\sigma_d$ are the $\Q$-embeddings of $K$ into $\Co$.
	
	If $ 1\le N_{K/\Q}\lambda\ll A $, we simply use that $ |N_{K/\Q}f(\lambda)|\ll N^d M^n $, so $ \log|N_{K/\Q}f(\lambda)|\ll_{n,d} \log N +\log M$. Therefore, since the elements in $\mathcal{O}_K$ of norm at most $M$ are at most $M$,\begin{multline*}
		\log P_f(M)=\sum_{ A\ll N_{K/\Q}\lambda\le M}\log|N_{K/\Q}f(\lambda)|+\sum_{N_{K/\Q}\lambda\ll A}\log|N_{K/\Q}f(\lambda)|\\
		=\sum_{ A\ll N_{K/\Q}\lambda\le M}\left( n\log N_{K/\Q}\lambda+O\left( \frac{N }{A}\right) \right) +\sum_{N_{K/\Q}\lambda\ll  A}\log |N_{K/\Q}f(\lambda)|\\
		=n M\log M+O\left( M+\frac{NM}{A}+A(\log N+\log M)\right)\\
		=n M\log M+O\left( M\frac{\log M}{\log\log M}+N\log\log M\right),
	\end{multline*}as $M\rightarrow+\infty$, by choosing $A=\frac{N}{\log M}\log\log M$ and $N=o\left( M\frac{\log M}{\log\log M}\right)$.
	
	$ \bullet $ $ \beta_\wp(N)=\underset{N_{K/\Q}\lambda\le M}{\max}\max\{k\ge0:\wp^k|f(\lambda)\} $; if $ \wp^k|f(\lambda) $, then in particular $ k\le\frac{\log|N_{K/\Q}f(\lambda)|}{\log q_\wp}\ll\frac{\log N+\log M}{\log q_{\wp}} $. Thus\begin{align*}
		\sum_{q_{\wp}\le M}\beta_\wp(M)\log q_\wp&\ll \sum_{q_\wp\le M}(\log N+\log M)\\
		&\ll M\Big(1+\frac{\log N}{\log M}\Big)\ll M
	\end{align*}under the conditions above.
	
	$\bullet $ If $ \wp $ is a prime which doesn't divide $ \mathfrak{D}_{K_f/K} $, then the number of solutions $ s_{\wp^k}(f) $ of $ f \mod \wp^k $ is equal to the number $ s_\wp(f) $ of solutions mod $ \wp $ (see Theorem 1 of \cite{Na}). On the other hand, by dividing the interval $ [1,M] $ into consecutive intervals of length $ q_{\wp}^k $, one has$$s_{\wp^k}(f)\Big[\frac{M}{q_{\wp}^k}\Big]\le \underset{f(\lambda)\equiv 0\ (\wp^k)}{\sum_{N_{K/\Q}\lambda\le M}}1\le s_{\wp^k}(f)\Big(\Big[\frac{M}{q_{\wp}^k}\Big]+1\Big),$$so$$\underset{f(\lambda)\equiv 0\ (\wp^k)}{\sum_{N_{K/\Q}\lambda\le M}}1=M\frac{s_{\wp^k}(f)}{q_{\wp}^k}+O(s_{\wp^k}(f)).$$ For those $ \wp $, one has\begin{align*}
		\alpha_\wp(M)&=\sum_{N_{K/\Q}\lambda\le M}\underset{\wp^k| f(\lambda)}{\sum_{k\ge 1}}1=\sum_{k\ge 1}\underset{f(\lambda)\equiv 0\ (\wp^k)}{\sum_{N_{K/\Q}\lambda\le M}}1\\
		&=\sum_{k\ge 1}M\frac{s_{\wp^k}(f)}{q_\wp^k}+O\Big(\sum_{1\le k\le \frac{\log N+\log M}{\log q_\wp}}1\Big)\\
		&=M\frac{s_\wp(f)}{q_\wp-1}+O\Big(\frac{\log N}{\log q_\wp}+\frac{\log M}{\log q_\wp}\Big).
	\end{align*}Therefore$$\underset{\wp\tiny\mbox{ unramified}}{\sum_{q_\wp\le M}}\alpha_\wp(M)\log q_\wp=M\underset{\wp\tiny\mbox{ unramified}}{\sum_{q_\wp\le M}}\frac{\log q_\wp}{q_\wp-1}s_\wp(f)+O(M).$$Using Proposition \ref{p9} we can estimate on average $ \underset{\wp\tiny\mbox{ unramified}}{\sum_{q_\wp\le x}}s_\wp(f) $ for $ x>0 $, $ x<N^{d\xi/(n+1)} $:\begin{align*}
		\frac{1}{|\mathscr{P}_{n,N}^{0}|}\sum_{f\in\mathscr{P}_{n,N}^{0}} \underset{\wp\tiny\mbox{ unramified}}{\sum_{q_\wp\le x}}s_\wp(f)&=\frac{1}{|\mathscr{P}_{n,N}^{0}|}\sum_{f\in\mathscr{P}_{n,N}^{0}} \underset{\wp\tiny\mbox{ unram.}}{\sum_{q_\wp\le x}}\underset{f(\alpha)\equiv 0\ (\wp)}{\sum_{\alpha\tiny{\mbox{ mod }}\wp}}1\\
		&=\sum_{\alpha}\underset{\sigma \alpha=\alpha}{\sum_{ \sigma\in S_n}}\frac{1}{|\mathscr{P}_{n,N}^{0}|}\sum_{f\in\mathscr{P}_{n,N}^{0}}\underset{\Frob_{f,\wp}=\sigma}{\sum_{q_\wp\le x,\ \wp\tiny\mbox{ unram.}}}1\\
		&=\sum_{\alpha}\underset{\sigma \alpha=\alpha}{\sum_{ \sigma\in S_n}}\frac{1}{|\mathscr{P}_{n,N}^{0}|}\sum_{f\in\mathscr{P}_{n,N}^{0}}\pi_{\mathscr{C}(\sigma),K_f/K}(x)\\
		&=\sum_{\alpha}\underset{\sigma \alpha=\alpha}{\sum_{ \sigma\in S_n}}\mathbb{E}_N(\pi_{\mathscr{C}(\sigma),K_f/K}(x))\\
		&=\sum_{\alpha}\underset{\sigma \alpha=\alpha}{\sum_{ \sigma\in S_n}}\Big(\frac{|\mathscr{C}(\sigma)|}{n!}\pi_K(x)+O(\log\log x)\Big)\\
		&=\pi_K(x)+O(\log\log x),
	\end{align*}where $\pi_{\mathscr{C}(\sigma),K_f/K}$ is the Chebotarev Density Theorem function on the conjugacy class $\mathscr{C}(\sigma)$ of $\sigma$. Note that$$\pi_{\mathscr{C}(\sigma),K_f/K}-\pi_{f,r}(x)\ll_{n,K}\log\log x$$on average, if $\mathscr{C}(\sigma)=\mathscr{C}_r$ for some $r$. Write$$s_\wp(f)=1+\sigma_\wp(f),$$where $ -1\le\sigma_\wp(f)\le n-1 $ and $$\frac{1}{|\mathscr{P}_{n,N}^{0}|}\sum_{f\in\mathscr{P}_{n,N}^{0}} \underset{\wp\tiny\mbox{ unramified}}{\sum_{q_\wp\le x}}\sigma_\wp(f)\ll_{n,K}\log\log x,$$ if $ x<N^{d\xi/(n+1)}  $.
	
	Now,\begin{align*}
		\underset{\wp\tiny\mbox{ unramified}}{\sum_{q_\wp\le M}}\frac{\log q_\wp}{q_\wp-1}s_\wp(f)&=\sum_{ q_\wp\le M}\frac{\log q_\wp}{q_\wp}-\underset{\wp\tiny\mbox{ ramified}}{\sum_{q_\wp\le M}}\frac{\log q_\wp}{q_\wp}\\&+\underset{\wp\tiny\mbox{ unramified}}{\sum_{q_\wp\le M}}\frac{\log q_\wp}{q_\wp}\sigma_\wp(f)+O(1).
	\end{align*}Since$$\sum_{ q_\wp\le M}\frac{\log q_\wp}{q_\wp}=\log M+O(1),$$and$$\underset{\wp\tiny\mbox{ ramified}}{\sum_{q_\wp\le M}}\frac{\log q_\wp}{q_\wp}\ll \log\log|N_{K/\Q}\mathfrak{D}_{K_f/K}|\ll\log\log N$$(see \cite{RZ}, Lemma 3.2), one gets$$\underset{\wp\tiny\mbox{ unramified}}{\sum_{q_\wp\le M}}\frac{\log q_\wp}{q_\wp-1}s_\wp(f)=\log M+\underset{\wp\tiny\mbox{ unramified}}{\sum_{q_\wp\le M}}\frac{\log q_\wp}{q_\wp}\sigma_\wp(f)+O(\log\log N).$$
	Let $ 0<\delta<\frac{1}{2(n+1)} $ and $ N>M(\log M)^{2\delta(n+1)} $, so that$$M':=\frac{M^{1/2(n+1)}(\log M)^{\delta}}{(\log N)^{1/(n+1)}}<N^{d\xi/(n+1)}.$$Write$$\underset{\wp\tiny\mbox{ unramified}}{\sum_{q_\wp\le M}}\frac{\log q_\wp}{q_\wp}\sigma_\wp(f)=\underset{\wp\tiny\mbox{ unramified}}{\sum_{q_\wp\le M'}}\frac{\log q_\wp}{q_\wp}\sigma_\wp(f)+\underset{\wp\tiny\mbox{ unramified}}{\sum_{M'<q_\wp\le M}}\frac{\log q_\wp}{q_\wp}\sigma_\wp(f).$$
	For the first term, by partial integration we obtain\begin{multline*}
		\frac{1}{|\mathscr{P}_{n,N}^{0}|}\sum_{f\in\mathscr{P}_{n,N}^{0}}\underset{\wp\tiny\mbox{ unramified}}{\sum_{q_\wp\le M'}}\frac{\log q_\wp}{q_\wp}\sigma_\wp(f)\ll\frac{\log M'}{M'}\log\log M'\\+\int_{2}^{M'}\log\log t\frac{(1-\log t)}{t^2}dt\ll 1,
	\end{multline*}since $ \int_{2}^{M'}\log\log t\frac{(1-\log t)}{t^2}dt\ll\int_{2}^{M'}\frac{t^{1/2}}{t^2}dt\ll 1 $.
	
	To treat the second term, note that it is$$\le(n-1)\sum_{M'<q_\wp\le M}\frac{\log q_\wp}{q_\wp}\le(n-1)\sum_{M-y<q_\wp\le M}\frac{\log q_\wp}{q_\wp},$$for $ y\ge M-M' $. If moreover we pick $ M\sim 2y $, then$$\sum_{M-y<q_\wp\le M}\frac{\log q_\wp}{q_\wp}\ll\log M-\log (M-y)\ll1,$$as $ M\rightarrow+\infty $.

	Hence we have the following estimate on average:\begin{align*}
		\frac{1}{|\mathscr{P}_{n,N}^{0}|}\sum_{f\in\mathscr{P}_{n,N}^{0}}\underset{\wp\tiny\mbox{ unramified}}{\sum_{q_\wp\le M}}\alpha_\wp(M)\log q_\wp&=M\log M+O\Big(M\log\log N+M\Big)\\&=M\log M+O(M\log\log N),
	\end{align*}for $  N>M(\log M)^{2\delta(n+1)} $, $ 0<\delta<\frac{1}{2(n+1)} $.
	
	$\bullet$ We divide the sum into two terms:\begin{align*}
		\underset{\wp\tiny\mbox{ ramified}}{\sum_{q_\wp\le M}}\alpha_\wp(M)\log q_\wp&=\underset{\wp\tiny\mbox{ ramified}}{\sum_{q_\wp\le M}}\log q_\wp|\{\lambda\in\mathcal{O}_K:N_{K/\Q}\lambda\le M,\ f(\lambda)\equiv 0\mbox{ mod } \wp\}|\\
		&+\underset{\wp\tiny\mbox{ ramified}}{\sum_{q_\wp\le M}}\log q_\wp\sum_{ N_{K/\Q}\lambda\le M}\underset{f(\lambda)\equiv 0\ (\wp^k)}{\sum_{k\ge 2}}1\\
		&=\mbox{I}+\mbox{II}.
	\end{align*}
	To estimate I, note that$$|\{\lambda\in\mathcal{O}_K:N_{K/\Q}\lambda\le M,\ f(\lambda)\equiv 0\mbox{ mod } \wp\}|=\Big[\frac{M}{q_\wp}\Big]s_\wp(f)\ll\frac{M}{q_\wp}s_\wp(f),$$so\begin{align*}
		\mbox{I}\ll\underset{\wp\tiny\mbox{ ramified}}{\sum_{q_\wp\le M}}\frac{\log q_\wp}{q_\wp}s_\wp(f)
		\ll_n M\underset{\wp\tiny\mbox{ ramified}}{\sum_{q_\wp\le M}}\frac{\log q_\wp}{q_\wp}
		\ll M\log\log N.
	\end{align*}The mean value of II is$$
	\frac{1}{|\mathscr{P}_{n,N}^{0}|}\sum_{f\in\mathscr{P}_{n,N}^{0}} \mbox{II}\ll\frac{1}{N^{nd}}\sum_{ q_\wp\le M}\log q_\wp\sum_{ N_{K/\Q}\lambda\le M}\sum_{2\le k\ll \frac{\log N+\log M}{\log q_\wp}}\underset{f(\lambda)\equiv 0\ (\wp^k)}{\sum_{f\in\mathscr{P}_{n,N}^{0}}}1.$$Similarly as we computed in Lemma \ref{l11}, note that for any $\lambda\in\mathcal{O}_K$,\begin{align*}
		\underset{f(\lambda)\equiv 0\ (\wp^k)}{\sum_{f\in\mathscr{P}_{n,N}^{0}}}1&=\underset{g(\lambda)=0}{\sum_{g\in\mathbb{F}_{q_{\wp}^k}[X]}}\underset{f\equiv g\tiny\mbox{ mod }\wp^k}{\sum_{f\in \mathscr{P}_{n,N}^{0}}}1.
	\end{align*}Since there are $q_{\wp}^{k(n-2)}$ possibilities for $g$ as in the above sum, one has$$\underset{f(\lambda)\equiv 0\ (\wp^k)}{\sum_{f\in\mathscr{P}_{n,N}^{0}}}1=\frac{(2N)^{nd}}{q_\wp^{2k}}+O(N^{d(n-\xi)})$$as long as $k\ll\frac{\log N}{\log q_\wp}$. Hence\begin{multline*}
		\frac{1}{|\mathscr{P}_{n,N}^{0}|}\sum_{f\in\mathscr{P}_{n,N}^{0}} \mbox{II}\ll M\sum_{ q_\wp\le M}\log q_\wp\sum_{k\ge 2}\Big(\frac{1}{q_{\wp}^2}\Big)^k+\frac{M}{N^{d\xi}}\sum_{ q_\wp\le M}\log q_\wp\sum_{k\ll \frac{\log N+\log M}{\log q_\wp}}1\\
		\ll M+\frac{M^2}{N^{d\xi}}\Big(\frac{\log N}{\log M}+1\Big).
	\end{multline*}To conclude$$\frac{1}{|\mathscr{P}_{n,N}^{0}|}\sum_{f\in\mathscr{P}_{n,N}^{0}}\underset{\wp\tiny\mbox{ ramified}}{\sum_{q_\wp\le M}}\alpha_\wp(M)\log q_\wp\ll M\log\log N+\frac{M^2}{N^{d\xi}}\Big(\frac{\log N}{\log M}+1\Big).$$
	
	$\bullet$ For $\lambda,\mu\in\mathcal{O}_K$ such that $N_{K/\Q}\lambda<N_{K/\Q}\mu$ let$$G(\mu,\lambda)=\frac{f(\mu)-f(\lambda)}{\mu-\lambda}.$$Once fixed $ \mu $, $ G(\mu,\lambda) $ is a polynomial in $ \lambda $ of degree $ n-1 $. 
	
	We are now dealing with the primes $\wp$ of norm $ q_\wp>M $, for which\begin{align*}
		\alpha_\wp(M)&=\sum_{ N_{K/\Q}\lambda\le M}\sum_{k\ge 1}\mathbbm{1}(f(\lambda)\equiv 0\mbox{ mod }\wp^k)\\
		&=\sum_{ k\ge 1}\underset{f(\lambda)\equiv 0\ (\wp^k)}{\sum_{ N_{K/\Q}\lambda\le M}}1\ll\sum_{1\le k\ll \frac{\log N+\log M}{\log q_\wp}}\ll_{n,K} 1.
	\end{align*}For $\wp$ of norm $ q_\wp>M $ we then have$$\alpha_\wp(M)-\beta_\wp(M)\ll_{n,K} 1.$$Note also that if $ \wp|f(\lambda) $, then $ |q_\wp|\le|N_{K/\Q}f(\lambda)|\ll N^dM^n $, so $ \alpha_\wp(M)=0 $ for $ q_\wp\gg N^dM^n $. Also, $ \alpha_\wp(M)\neq\beta_\wp(M) $ if and only if there exist $\mu,\lambda\in\mathcal{O}_K $, $N_{K/\Q}\lambda< N_{K/\Q}\mu\le M $ such that $ \wp|f(\mu) $ and $ \wp|f(\lambda) $, equivalently $ \wp|f(\lambda) $ and $ \wp|(\mu-\lambda)G(\mu,\lambda) $; but $ \wp\nmid(\mu-\lambda) $, since $ |N_{K/\Q}(\mu-\lambda)|\le M-1<q_\wp $, so $ \wp|G(\mu,\lambda) $.
	
	Thefore\begin{multline*}
		\sum_{ q_\wp> M}(\alpha_\wp(M)-\beta_\wp(M))\log q_\wp\ll\sum_{1\le N_{K/\Q}\lambda<N_{K/\Q}\mu\le M}\underset{\wp|G(\mu,\lambda)}{\underset{\wp|f(\ell)}{\sum_{ M<q_\wp\ll N^dM^n}}}\log q_\wp\\
		=\underset{G(\mu,\lambda)=0}{\sum_{1\le N_{K/\Q}\lambda<N_{K/\Q}\mu\le M}}\underset{\wp|f(\lambda)}{\sum_{ M<q_\wp\ll N^dM^n}}\log q_\wp+\underset{G(\mu,\lambda)\neq 0}{\sum_{1\le N_{K/\Q}\lambda<N_{K/\Q}\mu\le M}}\underset{\wp|G(\mu,\lambda)}{\underset{\wp|f(\lambda)}{\sum_{ M<q_\wp\ll N^dM^n}}}\log q_\wp\\
		\ll\sum_{1\le N_{K/\Q}\lambda<N_{K/\Q}\mu\le M}\underset{\wp|f(\lambda)}{\sum_{ M<q_\wp\ll N^dM^n}}\log q_\wp+\underset{G(\mu,\lambda)\neq 0}{\sum_{1\le N_{K/\Q}\lambda<N_{K/\Q}\mu\le M}}\underset{\wp|G(\mu,\lambda)}{\underset{\wp|f(\lambda)}{\sum_{ M<q_\wp\ll N^dM^n}}}\log q_\wp\\
		\ll(\log N+\log M)\underset{N_{K/\Q}\mu\le M}{\max}\{\wp:q_\wp>M,\ \wp|f(\mu)\}\\+\underset{G(\mu,\lambda)\neq 0}{\sum_{1\le N_{K/\Q}\lambda<N_{K/\Q}\mu\le M}}\underset{\wp|G(\mu,\lambda)}{\underset{\wp|f(\lambda)}{\sum_{ M<q_\wp\ll N^dM^n}}}\log q_\wp.
	\end{multline*}For $ N_{K/\Q}\mu\le M$, $ |N_{K/\Q}f(\mu)|\ll N^d M^n $, so the primes $\wp$ with $ q_\wp>M $ dividing $ f(\mu) $ are at most $ \ll\frac{\log (N^dM^n)}{\log M}\ll_{n,K}1 $. Thus$$\sum_{ q_\wp> M}(\alpha_\wp(M)-\beta_\wp(M))\log q_\wp\ll \underset{G(\mu,\lambda)\neq 0}{\sum_{1\le N_{K/\Q}\lambda<N_{K/\Q}\mu\le M}}\underset{\wp|G(\mu,\lambda)}{\underset{\wp|f(\lambda)}{\sum_{ M<q_\wp\ll N^dM^n}}}\log q_\wp+\log M,$$or on average\begin{multline*}
		\frac{1}{|\mathscr{P}_{n,N}^{0}|}\sum_{ q_\wp> M}(\alpha_\wp(M)-\beta_\wp(M))\log q_\wp\\
		\ll\underset{G(\mu,\lambda)\neq 0}{\sum_{1\le N_{K/\Q}\lambda<N_{K/\Q}\mu\le M}}\underset{\wp|G(\mu,\lambda)}{\sum_{ M<q_\wp\ll N^dM^n}}\log q_\wp|\{f:f(\lambda)\equiv 0\mbox{ mod }\wp\}|+\log M\\
		=\underset{G(\mu,\lambda)\neq 0}{\sum_{1\le N_{K/\Q}\lambda<N_{K/\Q}\mu\le M}}\underset{\wp|G(\mu,\lambda)}{\sum_{ M<q_\wp\ll N^dM^n}}\log q_\wp\Big(\frac{1}{q_\wp^2}+O\Big(\frac{1}{N^{d\xi}}\Big)\Big)+\log M\\
		\ll \underset{G(\mu,\lambda)\neq 0}{\sum_{1\le N_{K/\Q}\lambda<N_{K/\Q}\mu\le M}}\underset{\wp|G(\mu,\lambda)}{\sum_{ M<q_\wp\ll N^dM^n}}\frac{\log q_\wp}{q_\wp^2}\\+\frac{1}{N^{d\xi}}\underset{G(\mu,\lambda)\neq 0}{\sum_{1\le N_{K/\Q}\lambda<N_{K/\Q}\mu\le M}}\underset{\wp|G(\mu,\lambda)}{\sum_{ M<q_\wp\ll N^dM^n}}\log q_\wp+\log M\\
		=\mbox{I}+\mbox{II}+\log M.
	\end{multline*}For II, observe that since $ |G(\mu,\lambda)|\ll N^dM^{n-1} $, the number of primes $\wp$ of norm $ q_\wp>M $ dividing $ G(\mu,\lambda) $ is at most $ \ll\frac{\log(N^dM^{n-1})}{\log M}\ll1 $, so$$\mbox{II}\ll\frac{M^2}{N^{d\xi}}\log M.$$For I, we separate the contribution of small and large prime. Pick $ M<B_{M,N}\ll N^dM^n $; for small primes we have\begin{align*}
		\underset{G(\mu,\lambda)\neq 0}{\sum_{1\le N_{K/\Q}\lambda<N_{K/\Q}\mu\le M}}\underset{\wp|G(\mu,\lambda)}{\sum_{ M<q_\wp\le B_{M,N}}}\frac{\log p}{q_\wp^2}&=\sum_{ M<q_\wp\le B_{M,N}}\frac{\log q_\wp}{q_\wp^2}\underset{G(\mu,\lambda)\equiv 0\ (\wp)}{\sum_{1\le N_{K/\Q}\lambda<N_{K/\Q}\mu\le M}}1\\
		&\ll M \sum_{ M<q_\wp\le B_{M,N}}\frac{\log q_\wp}{q_\wp^2}\ll M,
	\end{align*}since $ \underset{G(\mu,\lambda)\equiv 0\ (\wp)}{\sum_{1\le N_{K/\Q}\lambda<N_{K/\Q}\mu\le M}}1\le (n-1)M $. For large primes,\begin{multline*}
		\underset{G(\mu,\lambda)\neq 0}{\sum_{1\le N_{K/\Q}\lambda<N_{K/\Q}\mu\le M}}\underset{\wp|G(\mu,\lambda)}{\sum_{ B_{M,N}<q_\wp\ll N^dM^n }}\frac{\log q_\wp}{q_\wp^2}\\
		\ll\frac{(\log N+\log M)}{B_{M,N}^2}\underset{G(\mu,\lambda)\neq 0}{\sum_{1\le N_{K/\Q}\lambda<N_{K/\Q}\mu\le M}}|\{\wp:q_\wp>B_{M,N},\ \wp|G(\mu,\lambda)\}|\\
		\ll\frac{M^2}{B_{M,N}^2}\log M\frac{\log M}{\log B_{M,N}},
	\end{multline*}by observing that $ |\{\wp:q_\wp>B_{M,N},\ \wp|G(\mu,\lambda)\}|\ll \frac{\log(N^dM^{n-1})}{\log B_{M,N}}\ll\frac{\log M}{\log B_{M,N}} $ since $ |G(\mu,\lambda)|\ll N^dM^{n-1} $.
	We obtained$$
	\frac{1}{|\mathscr{P}_{n,N}^{0}|}\sum_{f\in\mathscr{P}_{n,N}^{0}}\sum_{ q_\wp> M}(\alpha_\wp(M)-\beta_\wp(M))\log q_\wp\ll M+\frac{M^2}{N^{d\xi}}\log M+\log M
	$$by choosing for instance $ B_{M,N}=M\log M $.

	Finally,\begin{multline*}
		\frac{1}{|\mathscr{P}_{n,N}^{0}|}\sum_{f\in\mathscr{P}_{n,N}^{0}}\log|N_{K/\Q}(\lcm(f(\lambda):N_{K/\Q}\lambda\le M))=(n-1)M\log M\\
		+O\left( M\frac{\log M}{\log\log M}+N\log\log M +M\log\log M+\frac{M^2}{N^{d\xi}}\log M\right)\\
		=(n-1)M\log M
		+O\left( M\frac{\log M}{\log\log M}+N\log\log M\right),
	\end{multline*}when $M(\log M)^\ell\ll N=o\left( M\frac{\log M}{\log\log M}\right)$, $0<\ell<1$ small enough.
	
\end{proof}	
Next step is to estimate the variance of $\Psi_f(N,M)$.
\begin{prop}\label{prop4}
		Let $N,M>0$ such that$$M(\log M)^\ell\ll N=o\left( M\frac{\log M}{\log\log M}\right) $$for some $0<\ell<1$. Then$$
	\sigma^2_N(\Psi_f(N,M))\ll \frac{M^2(\log M)^2}{\log\log M}+NM\log M\log\log M,
	$$as $N,M\rightarrow+\infty$. 
\end{prop}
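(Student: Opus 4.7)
The plan is to parallel the proof of Proposition \ref{prop3} by decomposing $\Psi_f(N,M)$ into the same five pieces and bounding the variance of each separately. Writing $\Psi_f = X_1 + X_2 + X_3 + X_4 + X_5$ for the five terms and applying the Cauchy--Schwarz inequality $\sigma^2(X_1 + \cdots + X_5) \le 5 \sum_i \sigma^2(X_i)$, it suffices to control each variance individually. For the pieces that already admit uniform-in-$f$ bounds in Proposition \ref{prop3}, namely $\log P_f(M) = nM \log M + O(M\log M/\log\log M + N\log\log M)$ and $\sum_{q_\wp \le M}\beta_\wp(M)\log q_\wp \ll M$, the variance is at most the square of the stated error, and under the hypothesis $M(\log M)^\ell \ll N = o(M \log M/\log\log M)$ these contributions are absorbed into the target bound.

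The main piece requiring genuine second-moment analysis is the unramified sum
$$U_f := \underset{\wp\,\mbox{\tiny unram}}{\sum_{q_\wp \le M}} \alpha_\wp(M)\log q_\wp,$$
which in the proof of Proposition \ref{prop3} was reduced, after writing $s_\wp(f) = 1 + \sigma_\wp(f)$, to
$$U_f = M\log M + M \underset{\wp\,\mbox{\tiny unram}}{\sum_{q_\wp \le M}} \frac{\log q_\wp}{q_\wp - 1}\sigma_\wp(f) + O(M\log\log N).$$
Only the middle sum is random, so its variance controls $\sigma^2(U_f)$. Expanding yields
$$M^2 \sum_{\wp, \wp'} \frac{\log q_\wp \log q_{\wp'}}{(q_\wp - 1)(q_{\wp'} - 1)}\,\mathrm{Cov}(\sigma_\wp(f), \sigma_{\wp'}(f)),$$
which we split by diagonal and off-diagonal contributions. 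On the diagonal, Proposition \ref{p9}(b) gives $\mathrm{Var}(\sigma_\wp(f)) = O(1)$, so the contribution is $\ll M^2 \sum_\wp (\log q_\wp/q_\wp)^2 \ll M^2$. For the off-diagonal one writes $\sigma_\wp(f) = \sum_r (r_1 - 1)\mathbbm{1}_{f,r}(\wp)$ and applies Lemma \ref{l11} with $k = 2$, which yields $\mathrm{Cov}(\sigma_\wp(f), \sigma_{\wp'}(f)) = O(N^{-d\xi})$ whenever both $q_\wp, q_{\wp'}$ lie below the threshold $N^{d\xi/(2n)}$.

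The key step is to split the off-diagonal sum at a carefully chosen threshold $M^*$: for both primes below $M^*$, approximate independence produces a negligible $O(M^2 N^{-d\xi}(\log M)^2)$, while above $M^*$ we fall back on the deterministic bound $|\sigma_\wp(f)|\le n-1$, giving a contribution of order $M^2 \log M \cdot (\log M - \log M^*)$. Tuning $M^*$ so that $\log M - \log M^* \ll \log M/\log\log M$ produces the principal $M^2(\log M)^2/\log\log M$ term in the target. The remaining pieces, namely the ramified sum and the tail $\sum_{q_\wp > M}(\alpha_\wp(M) - \beta_\wp(M))\log q_\wp$, are handled by second-moment analogues of the arguments in Proposition \ref{prop3}, again using Lemma \ref{l11} to factor joint probabilities over pairs $(f, f')$; together with the cross-term $MN\log M$ produced by squaring the expectation error of $\log P_f(M)$, these yield the subdominant $NM\log M\log\log M$ term. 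The main obstacle is precisely the balancing at $M^*$: since the hypothesis on $N$ is too weak to let Lemma \ref{l11} apply when both primes approach $M$, one must carefully interpolate between the approximately-independent regime and the crude deterministic regime, and this interplay is what dictates the shape of the stated variance bound.
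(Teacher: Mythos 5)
Your decomposition into $X_1,\dots,X_5$ is exactly the paper's, and your bounds for $\sigma^2(X_1)$, $\sigma^2(X_2)$ and the diagonal of $\sigma^2(X_3)$ are sound; note that the inequality $\sigma^2(\sum X_i)\le 5\sum\sigma^2(X_i)$ does not lose anything essential here, because the large deterministic main terms $nM\log M$ and $M\log M$ in $X_1$ and $X_3$ drop out of each individual variance, so there is no further cancellation to exploit between the pieces (the paper, which instead computes $\mathbb{E}_N(\Psi_f^2)-\mathbb{E}_N(\Psi_f)^2$ term by term including cross products, is tracking precisely this same dropout by hand). So the structural route you chose is legitimate and, if anything, cleaner.

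The genuine gap is in the tuning of $M^*$. You want $\log M-\log M^*\ll\log M/\log\log M$, i.e.\ $M^*\ge M^{1-O(1/\log\log M)}$, while simultaneously requiring $q_\wp,q_{\wp'}\le M^*$ to lie in the range where Lemma \ref{l11} with $k=2$ applies, namely $q_\wp<N^{d\xi/(2n)}$ (and for the covariance of $\sigma_\wp$ one actually needs the slightly smaller threshold $N^{d\xi/(2(n+1))}$ coming from Proposition \ref{p9}). Under the standing hypothesis $N=o(M\log M/\log\log M)$, the allowed ceiling for $M^*$ is $N^{d\xi/(2n)}\ll M^{d\xi/(2n)+o(1)}$. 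Since $d\xi/(2n)$ is a fixed constant strictly less than $1$ (e.g.\ $\xi=\tfrac12-\varepsilon$, $n\ge3$, $d=1$ gives exponent $<1/12$), one has $\log M-\log M^*\asymp\log M$, not $\log M/\log\log M$. Your deterministic fallback then gives the off-diagonal contribution
\[
M^2\Big(\sum_{M^*<q_\wp\le M}\frac{\log q_\wp}{q_\wp}\Big)\Big(\sum_{q_{\wp'}\le M}\frac{\log q_{\wp'}}{q_{\wp'}}\Big)\asymp M^2(\log M)^2,
\]
which exceeds the target $M^2(\log M)^2/\log\log M$ by a factor of $\log\log M$. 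In other words, the interpolation you describe cannot close the gap; the crude bound $|\sigma_\wp(f)|\le n-1$ on the tail $M^*<q_\wp\le M$ is simply too weak given how low $M^*$ must sit. (You correctly flag this as ``the main obstacle,'' but the proposal does not supply the missing ingredient, which would have to be some genuine cancellation in $\sum_{M^*<q_\wp\le M}\frac{\log q_\wp}{q_\wp}\sigma_\wp(f)$ beyond the triangle inequality; the paper's own treatment of this tail appeals to a ``tail of a convergent series'' bound for $\sum_{M'<q_\wp\le M}\frac{\log q_\wp}{q_\wp}$, which is not a convergent series, so the difficulty you identified is real and should be addressed head on.)

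A secondary imprecision: for $\sigma^2(X_1)$, the statement ``uniform-in-$f$'' error $O(M\log M/\log\log M+N\log\log M)$ gives $\sigma^2(X_1)\ll M^2(\log M)^2/(\log\log M)^2+N^2(\log\log M)^2$, and you should observe explicitly that $N^2(\log\log M)^2\ll NM\log M\log\log M$ because $N=o(M\log M/\log\log M)$; this is where the second term of the stated bound actually enters, rather than from ``squaring the expectation error of $\log P_f(M)$'' as a cross-term, which is how you phrase it.
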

In particular, $\sigma^2_N(\Psi_f(N,M))=o(\mathbb{E}_N(	\Psi_f(N,M)))$ in the above range of $N,M$, so $\Psi_f(N,M)\sim\mathbb{E}_N(	\Psi_f(N,M))$ almost surely, which shows Theorem \ref{thm2}.
\begin{proof}
	One has $\sigma^2_N(\Psi_f(N,M))=\mathbb{E}_N(	\Psi_f(N,M)^2)-\mathbb{E}_N(	\Psi_f(N,M))^2$. The square of the mean value can easily be estimated by means of Proposition \ref{prop3}:\begin{multline*}
		\mathbb{E}_N(	\Psi_f(N,M))^2=(n-1)^2M^2(\log M)^2\\+O\left(\frac{M^2(\log M)^2}{\log\log M}+NM\log M\log\log M \right). 
	\end{multline*}It remains to study the average bahviour of $\Psi_f(N,M)^2$. We will be consistent with the notations of the previous proposition. Write$$	\Psi_f(N,M)=X_1+\dots+X_5,$$where the terms $X_i$ are defined at the beginning of the proof of Proposition \ref{prop3}, with the same order. Therefore
	\begin{align*}
			\Psi_f(N,M)&=X_1^2+\dots+X_5^2\\
			&=\pm 2\sum_{i\neq j=1,\dots,5}X_iX_j.
	\end{align*}

	\begin{enumerate}
		\item [$\bullet$]The first term is \begin{multline*}
			X_1^2=(\log P_f(M))^2=n^2M^2(\log M)^2\\+O\left( \frac{M^2(\log M)^2}{\log\log M}+NM\log M\log\log M\right)
		\end{multline*}if $N=o\left(\tfrac{M\log M}{\log\log M} \right) $.
		\item [$\bullet$] With the same argument of Proposition \ref{prop3} we can see that$$X_2^2\ll M^2.$$
		\item [$\bullet$]Write$$X_3^2=M^2\Big( \underset{\wp\tiny\mbox{ unr}}{\sum_{q_\wp\le M}}\frac{\log q_\wp}{q_\wp-1}s_\wp(f)\Big)^2+O(M^2\log M), $$where $ s_\wp(f) $ is the number of solutions of $f$ mod $ \wp $. We need an estimate on average of $\underset{\wp,\mathfrak{q}\tiny\mbox{ unr}}{\sum_{q_{\wp},q_{\mathfrak{q}}\le M}}s_\wp(f)s_{\mathfrak{q}}(f)$. If $M$ is small enough compared to $N$, we can use Proposition \ref{p9} to get\begin{align*}
			\mathbb{E}_N\Big(\underset{\wp,\mathfrak{q}\tiny\mbox{ unr}}{\sum_{q_{\wp},q_{\mathfrak{q}}\le M}}
			\underset{f(\alpha)\equiv0\ (\wp)}{\sum_{\alpha,\wp}}\ \underset{f(\beta)\equiv0\ (\mathfrak{q})}{\sum_{\beta,\mathfrak{q}}}1\Big)&=\sum_{\alpha,\beta}\underset{\tau\beta=\beta}{\underset{\sigma\alpha=\alpha}{\sum_{\sigma,\tau}}}\mathbb{E}_N\Big(\underset{\Frob_{f,\wp}=\sigma}{\sum_{q_{\mathfrak{q}}\le M\tiny\mbox{unr}}}\ \underset{\Frob_{f,\mathfrak{q}}=\sigma}{\sum_{q_{\mathfrak{q}}\le M\tiny\mbox{unr}}}1\Big)\\
			&=\pi_K(M)^2+O(\pi_K(M)\log\log M).
		\end{align*}We then have$$\mathbb{E}_N\Big( \underset{\wp,\mathfrak{q}\tiny\mbox{ unr}}{\sum_{q_{\wp},q_{\mathfrak{q}}\le M}}\sigma_\wp(f)\sigma_{\mathfrak{q}}(f)\Big)\ll\pi_K(M)\log\log M.$$
		By partial summation,\begin{multline*}
		\mathbb{E}_N\Big(\underset{\wp\tiny\mbox{ unr}}{\sum_{q_\wp\le M}}\frac{\log q_\wp}{q_\wp-1}s_\wp(f) \Big)=\underset{\wp\tiny\mbox{ unr}}{\sum_{q_\wp\le M}}\left( \frac{\log q_\wp}{q_\wp}\right) ^2\\+\mathbb{E}_N\Big(\Big(\underset{\wp\tiny\mbox{ unr}}{\sum_{q_\wp\le M}}\frac{\log q_\wp}{q_\wp}\sigma_\wp(f)\Big)^2\Big)\\
		=(\log M)^2+O((\log\log N)^2)+\mathbb{E}_N\Big(\Big(\underset{\wp\tiny\mbox{ unr}}{\sum_{q_\wp\le M}}\frac{\log q_\wp}{q_\wp}\sigma_\wp(f)\Big)^2\Big).
		\end{multline*}Fix $f$, and let $M'\le M$. The contribution of the "small primes" is given by\begin{multline*}
		\Big(\underset{\wp\tiny\mbox{ unr}}{\sum_{q_\wp\le M'}}\frac{\log q_\wp}{q_\wp}\sigma_\wp(f)\Big)^2=\Big(\underset{\wp\tiny\mbox{ unr}}{\sum_{q_\wp\le M'}}\sigma_\wp(f)\frac{\log M'}{M'}+\int_{2}^{M'}\sum_{q_\wp\le t}\sigma_\wp(f)\frac{1-\log t}{t^2}dt\Big)^2\\
		=\frac{(\log M')^2}{M'^{2}}\mathbb{E}_N\Big(\Big(\underset{\wp\tiny\mbox{ unr}}{\sum_{q_\wp\le M'}}\sigma_\wp(f)\Big)^2\Big)\\+\Big(\int_{2}^{M'}\sum_{q_\wp\le t}\sigma_\wp(f)\frac{1-\log t}{t^2}dt\Big)^2
		+O\left( \log\log M'\frac{\log M'}{M'}\right).
		\end{multline*}We use the Cauchy-Schwarz inequality to bound the square of the integral\begin{align*}
		\Big(\int_{2}^{M'}\sum_{q_\wp\le t}\sigma_\wp(f)\frac{1-\log t}{t^2}dt\Big)^2&\le\int_{2}^{M'}\Big(\underset{\wp\tiny\mbox{ unr}}{\sum_{q_\wp\le t}}\sigma_\wp(f)\Big)^2dt\cdot\int_{2}^{M'}\Big(\frac{1-\log t}{t^2}\Big)^2dt.
		\end{align*}On average we thus have\begin{align*}
		\mathbb{E}_N\Big(\Big(\underset{\wp\tiny\mbox{ unr}}{\sum_{q_\wp\le M'}}\frac{\log q_\wp}{q_\wp}\sigma_\wp(f)\Big)^2\Big)&\ll\log\log M'\frac{\log M'}{M'}+\int_{2}^{M'}\frac{t}{\log t}\log\log t dt\\
		&\ll M'^2.
		\end{align*}For the primes of norm $M'<q_\wp\le M$, simply use that$$\left( \sum_{M'<q_\wp\le M}\frac{\log q_\wp}{q_\wp}\right) ^2\ll 1$$since it is the tail of a convergent series as $M\rightarrow+\infty$.\\
		Pick now $M'=\tfrac{\log M}{\log\log M}<N^{d\xi/2n+1}$. It follows that$$\mathbb{E}_N(X_3^2)=M^2(\log M)^2+O\left( M^2(\log\log N)^2+M^2\log M+M^2\left( \frac{\log M}{\log\log M}\right) ^2\right) .$$
		\item [$\bullet$] $$X_4^2=\underset{\wp,\mathfrak{q}\tiny\mbox{ ramified}}{\sum_{q_{\wp},q_{\mathfrak{q}}\le M}}\alpha_\wp(M)\alpha_{\mathfrak{q}}(M)\log q_\wp \log q_{\mathfrak{q}}=\mbox{I}^2+\mbox{II}^2+2\mbox{I}\cdot\mbox{II}.$$In particular$$\mathbb{E}_N(\mbox{I}^2)\ll M^2\log\log N,$$and so\begin{align*}
			\mathbb{E}_N(\mbox{I}\cdot\mbox{II})&\ll M\log\log N\cdot\mathbb{E}_N(\mbox{II})\\
			&\ll M\log\log N\left( M+\frac{M^2}{N^{d\xi}}\log M\right)\\
			&\ll M^2\log\log N. 
		\end{align*}Finally,\begin{align*}
		\mathbb{E}_N(\mbox{II}^2)&=	\mathbb{E}_N\Big(\Big(\underset{\wp\tiny\mbox{ ram}}{\sum_{q_{\wp}\le M}}\log q_\wp\sum_{N_{K/\Q}\lambda\le M}\underset{f(\lambda)\equiv 0\tiny\ (\wp^k)}{\sum_{k\ge2}}1\Big)^2\Big)\\
		&\ll\frac{1}{N^{nd}}\sum_{q_{\wp},q_{\mathfrak{q}}\le M}\log q_\wp\log q_{\mathfrak{q}}\underset{N_{K/\Q}\lambda_i\le M}{\sum_{\lambda_1,\lambda_2}}\sum_{2\le k\ll\log N+\log M}\underset{f(\lambda_2)\equiv0\ (\mathfrak{q}^k)}{\underset{f(\lambda_1)\equiv0\ (\wp^k)}{\sum_{f}}}1\\
		&\ll M^2\sum_{q_{\wp},q_{\mathfrak{q}}\le M}\log q_\wp\log q_{\mathfrak{q}}\sum_{k\ge2}\left( \frac{1}{q_\wp^2q_{\mathfrak{q}}^2}\right)^k\\
		&\ll M^2, 
		\end{align*}which yields to$$\mathbb{E}_N(X_4^2)\ll M^2\log\log N.$$
		\item [$\bullet$]By using the exact same technique as in the previous result with the auxiliary function $G(\mu,\lambda)$, one gets$$\mathbb{E}_N(X_5^2)\ll M^2.$$
		\item [$\bullet$]Now for the cross products, we easily obtain estimates by using the ones we have for $X_1,\dots,X_5$. The only product giving a contribution is
\begin{multline*}
	\mathbb{E}_N(-2X_1X_3)=-2n M^2(\log M)^2\\+O\left( M^2\log\log N\log M+\frac{M^2(\log M)^2}{\log\log M}+NM\log M\log\log M\right).
\end{multline*}	
	All the other terms are negligible:
			\begin{align*}
			\mathbb{E}_N(X_1X_2)&\ll M^2\log M,\\
			\mathbb{E}_N(X_1X_4)&\ll M^2\log M\log\log N,\\
			\mathbb{E}_N(X_1X_5)&\ll M^2\log M,\\
			\mathbb{E}_N(X_2X_3)&\ll M^2\log M,\\
			\mathbb{E}_N(X_2X_4)&\ll M^2\log M\log\log N,\\
			\mathbb{E}_N(X_2X_5)&\ll M^2\log M,\\
			\mathbb{E}_N(X_3X_4)&\ll M^2\log \log N,\\
			\mathbb{E}_N(X_3X_5)&\ll M^2,\\
			\mathbb{E}_N(X_4X_5)&\ll M^2\log \log N.
		\end{align*}
	\end{enumerate}
	By putting everything together, we see that the terms of size $M^2(\log M)^2$ erase in $	\sigma^2_N(\Psi_f(N,M))$, which gives the desired upper bound.
	
\end{proof}

\pagebreak

\noindent
\footnotesize
\textbf{EPFL} \'{E}COLE POLYTECHNIQUE F\'{E}D\'{E}RALE DE LAUSANNE, INSTITUT DE MATH\'{E}MATIQUES\\
\textit{Email address}: \texttt{ilaria.viglino@yahoo.it}

\end{document}